\newcommand{\R}{\mathbb{R}}
\newcommand{\C}{\mathbb{C}}
\renewcommand{\div}{\operatorname{div}}
\newcommand{\curl}{\operatorname{curl}}
\newcommand{\supp}{\operatorname{supp}}
\renewcommand{\leq}{\leqslant}\renewcommand{\le}{\leqslant}
\renewcommand{\geq}{\geqslant}\renewcommand{\ge}{\geqslant}
\newtheorem{theorem}{Theorem}[section]
\newtheorem{proposition}[theorem]{Proposition}
\newtheorem{lemma}[theorem]{Lemma}
\theoremstyle{remark}
\numberwithin{equation}{section}
\def\namedlabel#1#2{\begingroup
 #2%
 \def\@currentlabel{#2}%
 \phantomsection\label{#1}\endgroup
}
\DeclareMathOperator*{\sgn}{sgn}
\numberwithin{equation}{section}
\newcommand{\lb}{\label}
\newcommand{\beq}{\begin{equation}}
\newcommand{\eeq}{\end{equation}}
\newcommand{\bal}{\begin{align}}
\newcommand{\eal}{\end{align}}
\newcommand{\bals}{\begin{align*}}
\newcommand{\eals}{\end{align*}}
\newcommand{\bbR}{{\mathbb{R}}}
\newcommand{\calT}{{\mathcal T}}
\newcommand{\til}{\tilde}
\title{The Euler Equations in Planar Domains with Corners} 
\author{Christophe Lacave and Andrej Zlato\v s}
\def\adrese{
\begin{description}
\item[C. Lacave:] Univ. Grenoble Alpes, CNRS, Institut Fourier, F-38000 Grenoble, France.\\
Email: \texttt{christophe.lacave@univ-grenoble-alpes.fr}\\
Web page: \texttt{\url{https://www-fourier.ujf-grenoble.fr/~lacavec/}}

\item[A. Zlato\v s:] Department of Mathematics, University of California San Diego, La Jolla, CA 92093, USA\\
Email: \texttt{zlatos@ucsd.edu}\\
Web page: \texttt{\url{http://math.ucsd.edu/~zlatos/}}
\end{description}
}
\date{\today}
\begin{document}
\maketitle

\begin{abstract}
When the velocity field is not a priori known to be globally almost Lipschitz, global uniqueness of solutions to the two-dimensional Euler equations has been established only in some special cases, and the solutions to which these results apply share the property that the diffuse part of the vorticity is constant near the points where the velocity is insufficiently regular. Assuming that the latter holds initially, the challenge is then to propagate this property along the Euler dynamic via an appropriate control of the Lagrangian trajectories. In domains with obtuse corners and sufficiently smooth elsewhere, Yudovich solutions fail to be almost Lipschitz only near these corners, and we investigate necessary and sufficient conditions for the vorticity to remain constant there. We show that if the vorticity is initially constant near the whole boundary, then it remains such forever (and global weak solutions are unique), provided no corner has angle greater than $\pi$. We also show that this fails in general for domains that do have such corners.
\end{abstract}



\section{Introduction}

The most celebrated equations modeling the motion of an adiabatic and inviscid flow are undoubtedly the Euler equations. Fluid velocity $u$ and pressure $p$ on a spatial domain $\Omega$ are related via the equation
\begin{equation}\label{eq.Euler1}
 \partial_{t} u + (u\cdot \nabla) u = -\nabla p \qquad \text{on } (0,\infty) \times \Omega,
\end{equation}
and in the simplest setting, the fluid is assumed to have a constant density and to be incompressible:
\begin{equation}\label{eq.Euler2}
 \div u=0 \qquad \text{on } [0,\infty)\times \Omega.
 \end{equation}
 Moreover, in domains with boundaries, it is natural to assume that the boundary is impermeable:
 \begin{equation}\label{eq.Euler3}
u\cdot n =0 \qquad \text{on } [0,\infty)\times \partial\Omega.
 \end{equation}
Even though these equations are the first PDE model for the motion of a liquid, their study is still a very active area of research, in mathematics as well as in engineering and physics, because they not only describe well the motion of perfect fluids, but the structure of this system is also incredibly rich. 

In two spatial dimensions, the case considered in this article, the vorticity 
 \begin{equation}\label{eq.Euler4}
 \omega:=\curl u=\partial_{1} u_{2}-\partial_{2}u_{1} \qquad \text{on } [0,\infty)\times \Omega
 \end{equation}
 plays a crucial role, which one can see after taking the curl of Equation \eqref{eq.Euler1} to obtain
 \begin{equation}\label{eq.Euler5}
 \partial_{t} \omega + u\cdot \nabla \omega = 0 \qquad \text{on } (0,\infty) \times \Omega.
\end{equation}
The velocity formulation \eqref{eq.Euler1}--\eqref{eq.Euler3} is then equivalent to the vorticity formulation \eqref{eq.Euler2}--\eqref{eq.Euler5}, that is, the Euler system can be viewed as a transport equation \eqref{eq.Euler5} for the vorticity $\omega$, with the advecting velocity field $u$ obtained in terms of $\omega$ by solving the div-curl problem \eqref{eq.Euler2}--\eqref{eq.Euler4}.

\subsection{Existing well-posedness results}

Thanks to several conservation properties coming from the transport theory, well-posedness results for strong solutions to Euler equations in two dimensions were established long time ago: by Wolibner \cite{Wolibner} in bounded domains, by McGrath \cite{McGrath} in the whole plane, and by Kikuchi \cite{Kikuchi} in exterior domains. The vorticity formulation allows us to look for more general solutions in spaces defined in terms of the regularity of $\omega_{0}:=\omega(0,\cdot)$. The most celebrated result in this direction is the work of Yudovich \cite{yudo}, in which he obtained existence and uniqueness of a global weak solution for $\omega_{0}\in L^1\cap L^\infty(\Omega)$ (see also \cite{Bardos,Temam}). Disregarding the uniqueness issue, one can even obtain existence of a global weak solution for $\omega_{0}\in L^1\cap L^p(\Omega)$ with $p>1$ \cite{DiPernaMajda} and for $\omega_{0}\in H^{-1}\cap \mathcal{M}_{+}(\Omega)$ \cite{Delort}.

Unfortunately, all the above works consider only smooth domains, with $\partial\Omega$ being at least $C^{1,1}$. This restriction is not justified by the weak regularity of the studied solutions, and leaves aside many situations of practical interest. Mathematically, smoothness of $\partial\Omega$ is used to deduce a priori estimates of $\nabla u=\nabla \nabla^\perp \Delta^{-1}\omega$ in terms of $\omega$ thanks to $L^p$-continuity of the Riesz transform for any $p\in (1,\infty)$. We mention that while Jerison and Kenig constructed an example of a simply-connected bounded domain $\Omega$ of class $C^1$ and a function $f\in C^\infty(\Omega)$ such that $D^2 \Delta^{-1}f$ is not integrable \cite{Kenig}, this problem does not arise for Leray solutions of the Navier-Stokes equations on non-smooth domains because the estimate of $\nabla u$ comes directly from the energy estimate (see, e.g., \cite{CDGG}).

If the domain is convex, then the Riesz transform is continuous on $L^2(\Omega)$, and Taylor used this to obtain global existence of weak solutions for the 2D Euler equations \cite{Taylor}. More recently, G\' erard-Varet and Lacave extended the existence theory to a large class of irregular domains \cite{GV-Lac,GV-Lac2}, even allowing exotic geometries such as the Koch snowflake. Both articles consider $\omega_{0}\in L^1\cap L^p(\Omega)$ or $\omega_{0}\in H^{-1}\cap \mathcal{M}_{+}(\Omega)$, using only $L^2$ estimates for the velocity.

To address the question of uniqueness, even in smooth domains, we typically need much more regularity for the velocity, namely almost Lipschitz. For instance, Yudovich used the Calder\'on-Zygmund inequality
\[
\| \nabla u \|_{L^p} \leq C p \| \omega \|_{L^p} \qquad \forall p\in [2,\infty)
\]
to perform a Gronwall type argument when $\omega\in L^\infty([0,\infty);L^1\cap L^\infty(\Omega))$. Alternatively, one can use the well-known log-Lipschitzness of the velocity associated to a vorticity in $L^1\cap L^\infty(\Omega)$. That is,
\[
|u(x)-u(y)| \leq C(\|\omega\|_{L^1\cap L^\infty}) |x-y| \max\{1, -\ln|x-y|\} \qquad \forall x,y\in \Omega,
\]
which then implies uniqueness for the Lagrangian formulation \cite{MarPul}. Below this level of velocity regularity, we are aware of only one example where we have uniqueness in smooth domains, the vortex-wave system when the diffuse part of the initial vorticity is constant near the point vortex. This system was introduced by Marchioro and Pulvirenti to describe the Euler solution when the total vorticity is composed of a regular part $\omega\in L^1\cap L^\infty(\Omega)$ and a concentrated part $\gamma\delta_{z(t)}$. After proving that the regular part $\omega$ stays constant around the point vortex $z$ \cite{MarPul-VWS}, which is the place where the velocity is not regular, it is possible to prove uniqueness \cite{lacave-miot}.
We also note that uniqueness may not hold for unbounded vorticities, even in smooth domains, as is suggested by the papers \cite{Vis1,Vis2} of Vishik, where non-uniqueness was demonstrated on $\mathbb R^2$ with $\omega(0,\cdot) \in L^p(\mathbb R^2)$ for some $p>2$ and in the presence of forcing that is (uniformly in time) in the same space.

When it comes to less regular domains, most existing uniqueness results require $u\in \bigcap_{p\geq2 } W^{1,p}(\Omega)$, while the example of Jerison and Kenig shows that this regularity cannot be reached for general $C^1$ domains. Nevertheless, there is a large literature on elliptic regularity in domains with corners, that is, such that $\partial\Omega$ is piecewise regular, with the singular points all being corners. In this case, very precise estimates on the solution to the Laplace problem are known (see, for instance, \cite{Kondra,Grisvard,Mazya}) and depend on the angles of the corners. In particular, we note that $D^2 \Delta^{-1}f\in \bigcap_{p\geq2 } W^{1,p}(\Omega)$ for any smooth function $f$ if and only if all the corners are acute. However, the Calder\'on-Zygmund inequality had not been established in this case and it was not clear how to use the above results to obtain uniqueness. 

Instead, other methods proved useful in recent years. Bardos, Di Plinio, and Temam proved uniqueness when $\Omega$ is a square, using a reflexion argument \cite{BDT}, which can also be extended to convex domains with angles of all corners being $\frac \pi{2^k}$ for some integers $k$. General domains with acute corners and $\partial\Omega\in C^{2,\alpha}$ (with $\alpha>0$) away from the corners were treated by the first author, Miot, and Wang in \cite{LMW}, where precise estimates on the relevant conformal mapping lead to a log-Lipschitz estimate on the push forward of $u$ onto the unit disk, followed by a version of the uniqueness proof of Marchioro and Pulvirenti. Afterwards, Di Plinio and Temam proved uniqueness for general domains with acute corners and $\partial\Omega\in C^{1,1}$ away from the corners \cite{DT}, via obtaining a Calder\'on-Zygmund inequality for such domains and then employing the argument of Yudovich.

If $\Omega$ has a corner with an obtuse angle, then the velocity $u$ is far from Lipschitz, and it is not even bounded if the angle is greater than $\pi$. So just as for the vortex-wave system, the question of global uniqueness appears to be a very challenging problem if the vorticity is not constant in the neighborhood of the singularities of the velocity (i.e., of the obtuse corners in this case). If, on the other hand, $\omega_0$ is constant in these regions, the natural question becomes under what conditions this remains the case at later times. This was addressed by the first author in \cite{Lacave-SIAM}, where he introduced a special Lyapunov function that allowed him to show that if $\omega_{0}$ is constant near all of $\partial\Omega$ (with $\partial\Omega\in C^{1,1}$ and $\Omega$ having no acute corners) and has a definite sign, then the same will be true for all times $t>0$.

The main purpose of this article is to show that the sign condition on $\omega$ as well as the requirement of no acute corners are superfluous when all corners have angles smaller than $\pi$, while the sign condition cannot be discarded in general when this is not the case.

\subsection{Main results}

We will assume here that $\Omega$ is a bounded simply connected open subset of $\R^2$, with a boundary that is $C^{1,\alpha}$ except at a finite number of corners.

\begin{description}
\item[\namedlabel{H}{\rm\bf(H)}] Assume that $\partial \Omega$ is a piecewise $C^{1,\alpha}$ Jordan curve with $\alpha>0$, that is, there is a bijection $\gamma:\mathbb T\to\partial\Omega$
which is $C^{1,\alpha}$ except at finitely many points $\{ s_{k}=\gamma^{-1}(x_k)\}_{k=1}^N$ and $|\gamma'(s)|=1$ for all $s\notin \{s_1,\dots,s_N\}$.
Also assume that $\gamma$ parametrizes $\partial\Omega$ in the counterclockwise direction (i.e., ${\rm Ind}_{\gamma}(z)\in \{0,1\}$ for each $z$) and all the singularities of $\partial\Omega$ are corners with positive angles. That is, for $k=1,\dots,N$ we have
\[ 
\theta_{k}:=\lim_{s\to 0_+} {\rm Angle}(\gamma'(s_k+s),-\gamma'(s_k-s))\in (0,2\pi].
\]
\end{description}

\noindent{\it Remark.}
The case $\theta_{k}=0$ corresponds to an exterior cusp, whereas $\theta_{k}=2\pi$ to an interior cusp. In the course of the proof, we will need to straighten the corner via the map $z\mapsto z^{\pi/\theta_{k}}$. As in other works (see, e.g., \cite{Pomm2}), we exclude the case $\theta_{k}=0$ in order to avoid complications arising from straightening exterior cusps (see, e.g., \cite[Section 1]{MazyaSolovev}).
\smallskip

We will consider here velocity fields $u$ in the Yudovich class, that is,
\begin{equation}\label{Yudo-class}
u\in L^\infty([0,\infty);L^2(\Omega)) \qquad\text{and}\qquad \omega:=\curl u \in L^\infty ([0,\infty)\times \Omega),
\end{equation}
that are weak solutions of the velocity or the vorticity formulation of the 2D Euler equations. Existence of such solutions is established in \cite{GV-Lac}, and we postpone their precise definitions to Section~\ref{Sect.3.2}. Since the velocity is far from Lipschitz when corners with obtuse angles are present, a crucial step in the study of uniqueness is to estimate particle trajectories near $\partial\Omega$. As will be explained in Section~\ref{Sect.2.2} below, local elliptic regularity shows that for any divergence-free vector field $u$ verifying \eqref{Yudo-class} and for any $x\in \Omega$, there exists $t(x)\in(0,\infty]$ and a unique curve $ X(\cdot,x)\in W^{1,\infty}([0,t(x)))$ with $X(0,x)=x$ such that $X(t,x)\in \Omega$ for each $t\in[0,t(x))$,
\begin{equation}\label{1.11}
\frac{d}{dt} X(t,x) = u(t,X(t,x)) \qquad \text{for almost every } t\in [0,t(x)),
\end{equation}
as well as $X(t(x),x)\in \partial\Omega$ if $t(x)<\infty$. Here $t(x)$ is the maximal time of existence of the trajectory inside $\Omega$.
The first author showed in \cite{Lacave-SIAM} that $t(x)=\infty$ for domains as above with $\alpha=1$ and no acute corners (i.e., $\min_{k} \theta_{k}\geq \frac \pi 2$) if either $\omega(0,\cdot)\ge 0$ or $\omega(0,\cdot)\le 0$. To get this result, he introduced a Lyapunov function based on the Green's function in order to obtain an algebraic cancelation of the singularities at the corners, and the sign condition was useful to show that this function essentially encodes the distance to the boundary (see Section~\ref{Sect.2.2} for more details on his approach).

Our first main result shows that the sign condition can be dropped for domains whose corners have angles less than $\pi$ (including acute ones).

\begin{theorem}\label{main1} 
Let $\Omega$ satisfy \ref{H} with $\max_{k} \theta_{k}< \pi$, and let $u$ be a global weak solution of the Euler equations on $\Omega$ from the Yudovich class \eqref{Yudo-class}. 

\noindent (i)
Then $t(x)=\infty$ for each $x\in\Omega$ and the corresponding trajectory $X(\cdot,x)$ from \eqref{1.11}.
\smallskip

\noindent (ii)
If $\alpha=1$, then $\omega(t,\cdot)=\omega(0,X^{-1}(t,\cdot))$ for all $t>0$ and \eqref{1.11} holds for all $(t,x)\in [0,\infty)\times\Omega$, with the velocity $u$ being continuous on $[0,\infty)\times \Omega$.
\smallskip

\noindent (iii) 
If $\alpha=1$ and there is $a\in\R$ such that $\omega(0,\cdot)-a$ is supported away from $\partial\Omega$, then the support of $\omega(t,\cdot)-a$ never reaches $\partial\Omega$, and $u$ is the unique global weak solution from the Yudovich class with the same $\omega(0,\cdot)$.
\end{theorem}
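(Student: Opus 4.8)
The plan is to prove (i) with a Lyapunov function built from the torsion function $\tau(x):=\int_\Omega G_\Omega(x,y)\,dy$, i.e. the solution of $-\Delta\tau=1$ in $\Omega$ with $\tau|_{\partial\Omega}=0$; this is genuinely ``based on the Green's function,'' is strictly positive inside $\Omega$, and vanishes on $\partial\Omega$, so $\tau$ measures a (weighted) distance to the boundary. Along a trajectory, writing $\psi(t,\cdot)=\int_\Omega G_\Omega(\cdot,y)\,\omega(t,y)\,dy$ for the stream function (so $u=\nabla^\perp\psi$), one has $\tfrac{d}{dt}\tau(X(t,x))=u\cdot\nabla\tau=\nabla^\perp\psi\cdot\nabla\tau$. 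The heart of the argument is the pointwise estimate
\[
\lvert u(t,x)\cdot\nabla\tau(x)\rvert\le C\,\tau(x)\qquad\text{for }x\text{ near }\partial\Omega,
\]
with $C$ depending only on $\Omega$ and $\sup_t\|\omega(t,\cdot)\|_{L^\infty}$. Granting this, Gronwall gives $\tau(X(t,x))\ge\tau(x)\,e^{-Ct}>0$ for all finite $t$, so $X(t,x)$ stays at positive distance from $\partial\Omega$, proving $t(x)=\infty$.

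To prove the key estimate I would localize near each corner $x_k$ and invoke the elliptic regularity theory in corner domains. After straightening by $z\mapsto z^{\pi/\theta_k}$, both $\psi$ and $\tau$ solve a Dirichlet problem with bounded right-hand side and zero data on the two edges meeting at angle $\theta_k$; hence, in polar coordinates $(r,\phi)$ centered at $x_k$, each carries the same leading homogeneous mode $f_k:=r^{\pi/\theta_k}\sin(\pi\phi/\theta_k)$, namely $\psi=c_\psi f_k+\psi_r$ and $\tau=c_\tau f_k+\tau_r$, with remainders $\psi_r,\tau_r=O(r^{\min(2,\,2\pi/\theta_k)})$ and coefficients controlled by $\|\omega\|_{L^\infty}$. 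The algebraic cancellation is then transparent: the most singular contribution to $\nabla^\perp\psi\cdot\nabla\tau$ is $c_\psi c_\tau\,\nabla^\perp f_k\cdot\nabla f_k=0$, since $\nabla^\perp f\cdot\nabla f\equiv 0$ for every $f$. The surviving cross terms scale like $r^{\pi/\theta_k+\min(2,\,2\pi/\theta_k)-2}$, which is $\le r^{\pi/\theta_k}\sim\tau$ exactly when $\theta_k\le\pi$. This power count both explains the threshold and shows the estimate needs no sign restriction on $\omega$: the cancellation is \emph{structural}, coming from $f_k$ being the common corner profile of $\psi$ and $\tau$ rather than from any positivity of $\psi$, which is precisely what lets Lacave's sign hypothesis be dropped. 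I expect the main obstacle to be making these corner expansions and remainder bounds uniform and quantitative near obtuse corners (controlling $\nabla\psi_r,\nabla\tau_r$), and patching them to the easier estimate near the smooth part of $\partial\Omega$, where $\nabla\tau$ is normal while $u$ is tangent, so $u\cdot\nabla\tau$ vanishes on $\partial\Omega$ and is $O(\mathrm{dist})=O(\tau)$.

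For (ii), with $\alpha=1$ the boundary is $C^{1,1}$ away from corners, so interior plus boundary elliptic regularity shows $u$ is log-Lipschitz on compact subsets of $\overline\Omega\setminus\{x_1,\dots,x_N\}$ and extends continuously to $\overline\Omega$, vanishing at each corner since $\lvert u\rvert\lesssim r^{\pi/\theta_k-1}\to0$. By part (i) no trajectory reaches a corner, and away from the corners the Osgood (log-Lipschitz) modulus forces uniqueness of trajectories; hence $X(t,\cdot)$ is a well-defined homeomorphism of $\Omega$ for each $t$, whence $\omega(t,\cdot)=\omega(0,X^{-1}(t,\cdot))$ follows from the transport equation together with the continuity of $u$.

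Finally, for (iii) I would write $\omega=a+\omega_1$ with $\omega_1(0,\cdot)$ compactly supported in $\Omega$. Since $a$ is constant, $\omega_1$ solves the same transport equation, so its support is $X(t,\cdot)$ applied to $\mathrm{supp}\,\omega_1(0,\cdot)$; a quantitative form of (i) (the uniform lower bound on $\tau$ along trajectories) keeps this support in a fixed compact $K\subset\Omega$ on each $[0,T]$, so it never reaches $\partial\Omega$. On a neighborhood of $K$ the velocity is log-Lipschitz: the Biot--Savart field of $\omega_1$ splits into a full-plane part (log-Lipschitz because $\omega_1\in L^\infty$ is compactly supported) plus a boundary-correcting harmonic part that, although singular at the corners, is smooth in the interior and hence on $K$, while the constant piece $a$ contributes a fixed field, also smooth on $K$. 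Comparing two Yudovich solutions with the same data, their vorticities differ only inside $K$ and the advecting velocity is log-Lipschitz there uniformly in time, so a Marchioro--Pulvirenti/Yudovich Gronwall estimate on the Lagrangian trajectories closes the uniqueness argument.
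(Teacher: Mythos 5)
Your plan for (i) replaces the paper's Lyapunov function by the torsion function $\tau$, and everything hinges on the pointwise bound $|u\cdot\nabla\tau|\le C\tau$. As stated, this bound has two genuine gaps. First, near the \emph{smooth} part of $\partial\Omega$, which is only $C^{1,\alpha}$ with possibly $\alpha<1$: writing $u\cdot\nabla\tau=\det(\nabla\psi,\nabla\tau)$, tangency of $u$ controls the normal component of $u$ by $O(d\ln(1/d))$, but the \emph{tangential} component of $\nabla\tau$ at distance $d$ from the boundary is only $O(d^\alpha)$ (Schauder gives $\nabla\tau\in C^{0,\alpha}(\overline{\Omega'})$ and no better), while $u$ itself does not vanish there. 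So you only get $|u\cdot\nabla\tau|\lesssim d^\alpha\sim\tau^\alpha$, and $\dot y\ge -Cy^{\alpha}$ with $\alpha<1$ does \emph{not} prevent $y$ from reaching zero in finite time: the Osgood condition fails, and Gronwall does not close. (Rescuing this requires something like Lipschitz regularity of the quotient $\psi/\tau$ up to a $C^{1,\alpha}$ boundary, which you neither state nor prove.) Second, near a corner with $\theta_k\in(\frac\pi2,\pi)$ --- exactly where the trivial bound $|u|\,|\nabla\tau|\sim r^{2\pi/\theta_k-2}$ exceeds $\tau\sim r^{\pi/\theta_k}$ and the cancellation is genuinely needed --- the remainder rates you assume, $\nabla\psi_r,\nabla\tau_r=O(r^{\min(2,2\pi/\theta_k)-1})$, are not available: the edges are $C^{1,\alpha}$ curves rather than rays, so $f_k$ fails to vanish on $\partial\Omega$ by $O(r^{\pi/\theta_k+\alpha})$, and the harmonic correction this forces contributes a cross term of order $r^{2\pi/\theta_k+\alpha-2}$, which is $\lesssim r^{\pi/\theta_k}$ only when $\theta_k\le\pi/(2-\alpha)<\pi$. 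Moreover, for $\psi=\Delta^{-1}\omega$ with merely $\omega\in L^\infty$ an expansion with an $O(r)$ gradient remainder is not justified (one gets log-Lipschitz losses at best). So your power count does not cover the full range $\max_k\theta_k<\pi$ for general $\alpha>0$.

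The paper avoids both issues by taking $L(t)=1-\ln\bigl(1-|\mathcal{T}(X(t,x))|\bigr)$: the cancellation then occurs \emph{exactly} on the unit disk (the radial component of the disk kernel carries the factor $1-|\mathcal{T}(y)|^2$, see Lemma~\ref{lem-technic}), and all boundary-regularity information enters only multiplicatively through $\det D\mathcal{T}\cdot\det D\mathcal{T}^{-1}$, controlled by Proposition~\ref{prop T} via Kellogg--Warschawski after straightening; there are no additive remainders to expand. Two further points. In (ii) you cannot pass from ``trajectories are unique and $u$ is continuous'' to $\omega(t,\cdot)=\omega_0\circ X^{-1}(t,\cdot)$ for a merely distributional solution; this is a renormalization statement, and the paper proves it by extending $(u,\omega)$ to $\R^2$ (using $\psi\in W^{2,5/4}$, the uniform cone condition, and the nontrivial fact that the zero extension of $\omega$ solves the transport equation across $\partial\Omega$) and invoking DiPerna--Lions. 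Your sketch of (iii) is essentially the paper's Proposition~\ref{constant_vorticity} (Marchioro--Pulvirenti run on a region avoiding the singular boundary set), except that you must also add the continuation argument keeping the \emph{second} solution's support in that region, since a priori only the first solution's flow is known to stay there.
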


\noindent{\it Remarks.} 
1. To obtain $t(x)=\infty$ in (i), we introduce a simpler Lyapunov function than in \cite{Lacave-SIAM}, and the condition $\max_{k} \theta_{k}< \pi$ will be necessary to obtain a relevant Gronwall type estimate.
\smallskip

2. In fact, a more general version of (iii) holds, allowing $\supp \omega(0,\cdot)-a$ to meet $\partial\Omega$. Namely, Proposition~\ref{constant_vorticity} below gives uniqueness until the first time $t$ when the support of $\omega(t,\cdot)-a$ reaches a point where $\partial\Omega$ is not $C^{2,\tilde \alpha}$ for some $\tilde\alpha>0$ (e.g., a corner of $\Omega$). We conjecture that $C^{2,\tilde \alpha}$ can be replaced by $C^{1,1}$ here, that is, uniqueness holds as long as $\supp \omega(t,\cdot)-a$ vanishes near the corners. We also note that Proposition~\ref{constant_vorticity} holds for solutions satisfying (ii) on more general domains $\Omega$ than just those from \ref{H}.
\smallskip

Our second main result shows that the sign condition in \cite{Lacave-SIAM} is necessary for general domains whose corners have angles greater than $\pi$. (Note that if $\theta_k=\pi$ for some $k$, then $\partial\Omega$ is $C^{1,\alpha}$ at $x_k$, so $x_k$ is not a corner.)

\begin{theorem}\label{main2}
For any $\theta \in (\pi,2\pi]$, there exists $\Omega\subseteq\mathbb R^2$ satisfying \ref{H} with $\partial\Omega$ being $C^{\infty}$ except at one point, which is a corner with angle $\theta$, such that the following holds. There are weak solutions in the Yudovich class \eqref{Yudo-class} to the Euler equations on $\Omega$ such that $\omega(0,\cdot)$ is compactly supported inside $\Omega$ and there are infinitely many $x\in \supp(\omega(0,\cdot))$ such that the corresponding trajectory $X(\cdot,x)$ reaches $\partial\Omega$ (at the obtuse corner) in finite time.
\end{theorem}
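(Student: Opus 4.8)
The plan is to construct a \emph{sign-indefinite} vorticity whose self-induced flow drags a concentrated piece of it into the reentrant corner in finite time. That the construction must change sign is essentially forced: the domains we build carry a single corner of angle $\theta>\pi$ and are $C^\infty$ elsewhere, hence have no acute corners ($\min_k\theta_k=\pi>\tfrac\pi2$), so by \cite{Lacave-SIAM} every trajectory satisfies $t(x)=\infty$ as soon as $\omega(0,\cdot)$ has a definite sign. A counterexample therefore necessarily has $\omega(0,\cdot)$ changing sign, which points toward a concentrated vortex-pair configuration. (Heuristically, a single point vortex merely conserves its Robin function and orbits the corner at constant distance; it is the interaction within a pair that breaks this conservation and permits a net inward drift, in agreement with the sign requirement.)

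First I would fix the geometry. Such domains are easy to exhibit for any prescribed $\theta\in(\pi,2\pi]$ (smooth out a wedge), so I take any $\Omega$ satisfying \ref{H} that is $C^\infty$ away from a single corner of angle $\theta$, fix a Riemann map $T:\Omega\to\bbD$ sending the corner to $p\in\partial\bbD$, and record the corner asymptotics: straightening by $z\mapsto z^{\pi/\theta}$ as in the Remark gives $|T'(z)|\sim\dist(z,\text{corner})^{\pi/\theta-1}$. The sign of the exponent $\pi/\theta-1$ is the entire point. For a convex corner ($\theta<\pi$) it is positive, the velocity vanishes at the tip, and trajectories cannot reach it, which is the regime of \ref{main1}; for a reentrant corner ($\theta>\pi$) it is negative, so $|T'|\to\infty$ and, as recalled in the introduction, the velocity is unbounded near the corner, growing like $\dist(\,\cdot\,,\text{corner})^{\pi/\theta-1}$. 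This blow-up already forces \emph{finite-time} reaching once a trajectory heads inward: if $r(t):=\dist(X(t,x),\text{corner})$ obeys $\dot r\sim -r^{\pi/\theta-1}$, then $r$ vanishes in finite time precisely because $2-\pi/\theta>0$ when $\theta>\pi$. So the analytic burden is to make the \emph{direction} of the flow inward, not to win the time integral.

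The core construction is then an $\omega(0,\cdot)$ that is compactly supported inside $\Omega$, lies in the Yudovich class \eqref{Yudo-class}, and whose evolution genuinely steers a trajectory in. I would place a small, sign-indefinite, nearly self-similar vortex pair at positive distance from $\partial\Omega$, oriented so that its self-propulsion is aimed at the corner, and track it by comparison with the corresponding regularized point-vortex pair. Transferring the Biot--Savart law via $G_\Omega(x,y)=G_{\bbD}(Tx,Ty)$, this reduced pair obeys the Kirchhoff--Routh system on $\bbD$ built from the disk Green's function together with the conformal correction $\frac{1}{2\pi}\log|T'|$. The key dynamical claim is that, for the chosen signs and orientation, this reduced pair is attracted to $p$ and reaches it in finite time, and that a confinement estimate keeps the actual $L^\infty$ vorticity concentrated around the moving pair all the way up to the corner. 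Since an entire neighborhood of initial positions is carried in with the pair, infinitely many $x\in\supp(\omega(0,\cdot))$ yield trajectories reaching $\partial\Omega$ at the corner, as asserted; meanwhile $\|\omega(t,\cdot)\|_{L^\infty}=\|\omega(0,\cdot)\|_{L^\infty}$ and the conserved energy keep $u$ in \eqref{Yudo-class}.

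The main obstacle is exactly this confinement. The advecting velocity is singular at the very corner we are steering into, so the straining that could tear the vortex pair apart is strongest precisely where the configuration must stay coherent, and the usual concentration/stability estimates for nearly point-like vorticity have to be propagated up to the singular point rather than merely on a fixed compact subset of $\Omega$. Establishing that the pair neither disperses nor is deflected away before reaching $p$---equivalently, that the $L^\infty$ solution shadows the reduced point-vortex-pair trajectory into the corner---is the crux, and is where the sign and orientation of the pair, together with the reentrant geometry, must be exploited quantitatively. Once this is in hand, the remaining ingredients (the corner asymptotics of $T$ for a genuine $C^\infty$-away-from-corner domain, verification of the Yudovich class, and producing a concrete $\Omega$ realizing each $\theta\in(\pi,2\pi]$) are comparatively routine.
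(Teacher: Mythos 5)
There is a genuine gap here: the step you yourself identify as the crux --- that the $L^\infty$ solution shadows a reduced point-vortex pair all the way into the corner --- is left entirely unproved, and it is not a routine confinement estimate. Desingularization of a vortex pair up to a boundary point where the Biot--Savart kernel is singular is precisely the kind of statement the paper avoids having to prove. Worse, your mechanism for producing infinitely many points of $\supp(\omega(0,\cdot))$ reaching the corner (``an entire neighborhood of initial positions is carried in with the pair'') cannot work. Since the flow is measure-preserving and $|u(y)|\lesssim \dist(y,0)^{\pi/\theta-1}$ near the corner (with $\pi/\theta-1\geq -\tfrac12$), the fluid flux into $B(0,\varepsilon)$ over $[0,T]$ is $O(T\varepsilon^{\pi/\theta})$, so the set of $x$ whose trajectories enter $B(0,\varepsilon)$ by time $T$ has measure at most $|B(0,\varepsilon)\cap\Omega|+O(T\varepsilon^{\pi/\theta})\to 0$ as $\varepsilon\to0$: only a null set of trajectories can reach the corner in finite time. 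The infinitely many points must therefore be found inside a measure-zero subset of the closed support. Finally, your heuristic $\dot r\sim -r^{\pi/\theta-1}$ (unbounded inward velocity, ``the burden is direction, not the time integral'') overstates what is available: for configurations one can actually control, a cancellation makes $u$ bounded near the corner --- indeed vanishing there when $\theta<2\pi$ --- and winning the time integral is the real issue.

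The paper keeps your two correct observations (sign-changing vorticity is necessary by \cite{Lacave-SIAM}; $\theta>\pi$ is what makes the arrival time finite) but implements them by reflection rather than by concentrated pairs. Take $\Omega$ symmetric about the $x_1$-axis with the corner at the origin, and $\omega(0,\cdot)$ odd in $x_2$ with $\sgn(x_2)\,\omega(0,x)\le 0$ and with $\supp\omega(0,\cdot)$ meeting the axis. The restriction of the solution to the half-domain $\Omega^+$ solves Euler on a domain whose corner has angle $\tfrac\theta2\le\pi$, so Theorem~\ref{main1} applies there: the solution is global and unique, and trajectories starting in $\Omega^+$ stay there. On the symmetry axis $u_2\equiv0$, and computing the tangential component of the Biot--Savart law on $\partial\Omega^+$ via the Riemann map of $\Omega^+$ gives $u_1(t,x)\le -C_\beta x_1^{\nu}$ with $\nu=\tfrac{2\pi}{\theta}-1$; the exponent comes from the half-angle $\tfrac\theta2$, not $\theta$, and $\nu<1$ exactly when $\theta>\pi$. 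Hence every axis point of the support is carried to the corner in finite time; these form an infinite (measure-zero) subset of $\supp(\omega(0,\cdot))$ on which $\omega$ itself vanishes by oddness, and no shadowing or confinement argument is needed.
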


\noindent{\it Remark.}
Our examples here are in spirit related to examples of solutions to the 2D Euler equations on domains with interior cusps that loose continuity in finite time, by Kiselev and the second author \cite{KisZla}.
We also provide in Section~\ref{sec.4} examples of solutions as in Theorem~\ref{main2}, but with the points $x$ reaching $\partial\Omega$ in finite time not belonging to the support of the vorticity at $t=0$, such that the Euler equations have a unique weak solution on $[0,\infty)\times\Omega$. In this case uniqueness holds because the corresponding trajectories do not transport vorticity. Uniqueness of Yudovich solutions when the vorticity is not constant near the obtuse corner remains an open question.
\smallskip

The remainder of this article is divided into four parts. In Section~\ref{sec.2} we recall an explicit formula for $u$ in terms of $\omega$ (the Biot-Savart law), expressed in terms of the Green's function on the unit disc via the Riemann mapping. We then obtain necessary estimates on the derivatives of the Riemann mapping close to the corners. We also review the approach from \cite{Lacave-SIAM}, based on a Lyapunov function. In Section~\ref{sec.3}, we prove Theorem~\ref{main1}, while Section~\ref{sec.4} is devoted to the construction of the examples from Theorem~\ref{main2}. A technical lemma, used in Section~\ref{sec.3}, is proved in Appendix~\ref{app-lemma}.

\bigskip

\noindent
{\bf Acknowledgements.} CL was partially supported by the CNRS program Tellus, by the Agence Nationale de la Recherche, Project IFSMACS, grant ANR-15-CE40-0010 and Project SINGFLOWS, grant ANR-18-CE40-0027-01. AZ acknowledges partial supported by NSF grant DMS-1652284.

\section{Preliminaries}\label{sec.2}

We start by recalling some basic results concerning the Biot-Savart law, which determines the velocity field $u$ from its vorticity $\omega$. An explicit formula for this law, in terms of the relevant conformal mapping, will be the key to building an appropriate Lyapunov function that controls the distance between particle trajectories and the boundary $\partial\Omega$. We will then discuss the construction and properties of the trajectories, and recall the strategy of the proof of the main result of \cite{Lacave-SIAM}.

\subsection{Riemann mapping and the Biot-Savart law}

To find the velocity field $u$ in the 2D Euler equations from its vorticity $\omega$, one needs to solve the div-curl problem
\[
\div u = 0 \text{ in } \Omega, \qquad \curl u = \omega \text{ in } \Omega, \qquad u\cdot n = 0 \text{ on } \partial\Omega.
\]
When the domain $\Omega$ is simply connected, for any $\omega \in H^{-1}(\Omega)$ there exists a unique solution $u\in L^2(\Omega)$ (in the sense of distributions; see \cite{Galdi,GV-Lac} for the weak tangency condition in non-smooth domains). Moreover, $u$ can be expressed via the stream function $\psi:= \Delta^{-1}\omega \in H^1_{0}(\Omega)$ (with $\Delta$ the Dirichlet Laplacian on $\Omega$) through the relation $u=\nabla^\perp \psi = (-\partial_{2} \psi, \partial_{1} \psi)$.

Identifying $\R^2$ with $\C$ by setting $z=x_{1}+ix_{2}$, one can invert the Laplacian via a biholomorphism $\mathcal{T}:\Omega\to D$ (which exists due to the Riemann mapping theorem), with $D$ the unit disk. As $\partial \Omega \in C^{0,1}$, the Kellogg-Warschawski Theorem (see \cite[Theorem 3.6]{Pomm2}) implies that $\mathcal{T}$ is continuous up to the boundary and maps $\partial \Omega$ on $\partial D$.
Using the form of the Green's function on $D$, we obtain the formula
\[
\psi(x)=\Delta^{-1}\omega (x)= \frac1{2\pi}\int_{\Omega} \ln\frac{| \mathcal{T}(x)- \mathcal{T}(y)|}{| \mathcal{T}(x)- \mathcal{T}(y)^*| | \mathcal{T}(y)|} \omega(y)\, dy,
\]
where $z^* := {z}{|z|^{-2}}$.
Therefore, the Biot-Savart law (for time-dependent functions) reads
\begin{equation}\label{Biot-Savart}
 u(t,x) = K_{\Omega}[\omega(t,\cdot)](x) : = \frac1{2\pi}D\mathcal{T}^T(x)\int_{\Omega} \left( \frac{ \mathcal{T}(x)- \mathcal{T}(y)}{| \mathcal{T}(x)- \mathcal{T}(y)|^2} - \frac{ \mathcal{T}(x)- \mathcal{T}(y)^*}{| \mathcal{T}(x)- \mathcal{T}(y)^*|^2 } \right)^\perp \omega(t,y)\, dy.
\end{equation}

Having this formula, it is natural to first analyze the regularity properties of $\mathcal{T}$. 

\begin{proposition} \label{prop T}
Let $\Omega$ satisfy \ref{H} with $\max_{k=1,\dots,N}\theta_{k} < \pi$. Let $\delta_{0}:=\frac{1}{6}\min_{i\neq j} \{|x_{i}-x_{j}|, |\mathcal{T}(x_{i})-\mathcal{T}(x_{j})|\})$. There exists $M\ge1$, depending only on $\Omega$, such that
 \begin{itemize}
 \item for all $x\in \Omega\setminus \bigcup_{k=1}^N B(x_{k},\delta_0)$ and $y\in D\setminus \bigcup_{k=1}^N B(\mathcal{T}(x_{k}),\delta_0)$ we have
 \begin{equation*}
M^{-1} \leq | D\mathcal{T}(x) | \leq M \qquad\text{and}\qquad M^{-1}\le | D \mathcal{T}^{-1}(y) | \leq M ;
\end{equation*}
 \item for any $k =1,\dots, N$ and all $x \in \Omega\cap B(x_{k},\delta_0)$ and $y\in D\cap B(\mathcal{T}(x_{k}),\delta_0)$ we have
\begin{align*}
M^{-1} |x-x_k|^{\pi/\theta_{k}-1} \leq & | D\mathcal{T}(x) | \leq M |x-x_k|^{\pi/\theta_{k}-1} , 
\\ M^{-1} |y-\mathcal{T}(x_{k}) |^{\theta_{k}/\pi-1}\le & | D \mathcal{T}^{-1}(y) | \leq M |y-\mathcal{T}(x_{k}) |^{\theta_{k}/\pi-1},
\\ M^{-1}|x-x_k|^{\pi/\theta_{k}} \le & | \mathcal{T}(x)-\mathcal{T}(x_{k}) | \leq M |x-x_k|^{\pi/\theta_{k}}.
\end{align*}
\end{itemize}
\end{proposition}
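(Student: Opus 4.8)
The plan is to reduce all three families of estimates to the classical asymptotic behaviour of a conformal map at a corner, which I would obtain by explicitly straightening each corner. Write $\Phi:=\mathcal{T}^{-1}:D\to\Omega$. Since $\Phi$ is holomorphic and conformal, its Jacobian is a scaled rotation, so $|D\mathcal{T}^{-1}(\zeta)|=|D\Phi(\zeta)|=|\Phi'(\zeta)|$ and $|D\mathcal{T}(x)|=|\mathcal{T}'(x)|=|\Phi'(\mathcal{T}(x))|^{-1}$. Thus all three displayed inequalities follow once I control $|\Phi'(\zeta)|$ and $|\Phi(\zeta)-x_k|$ near each $\zeta_k:=\mathcal{T}(x_k)\in\partial D$, and show $|\Phi'|$ is bounded above and below away from the $\zeta_k$.

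First I would treat the region away from the corners. On $\partial\Omega\setminus\{x_1,\dots,x_N\}$ the boundary is $C^{1,\alpha}$, so the Kellogg--Warschawski theorem applies on each smooth arc and shows that $\Phi$ and $\mathcal{T}$ extend to $C^{1,\alpha}$ maps up to the boundary there, with nonvanishing conformal derivative. The choice $\delta_0=\tfrac16\min_{i\neq j}\{|x_i-x_j|,|\mathcal{T}(x_i)-\mathcal{T}(x_j)|\}$ guarantees that $\Omega\setminus\bigcup_k B(x_k,\delta_0)$ and $D\setminus\bigcup_k B(\zeta_k,\delta_0)$ stay a definite distance from all corners and all of their images, and a compactness argument then yields the uniform two-sided bounds on $|D\mathcal{T}|$ and $|D\mathcal{T}^{-1}|$ there.

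The heart of the matter is the behaviour near a single corner, which I may place at the origin after a translation. Locally $\partial\Omega$ consists of two $C^{1,\alpha}$ arcs meeting at $0$ with interior opening $\theta_k$, and the idea is to apply $P(z)=(e^{-i\beta_k}z)^{\pi/\theta_k}$ (with $\beta_k$ chosen to align the sector), which turns the opening $\theta_k$ into an opening $\pi$. Since $\theta_k<\pi$ we have $\pi/\theta_k>1$, so no cusp is created and $P$ is well behaved; the key point, which I would isolate as a lemma, is that the $C^{1,\alpha}$ regularity of the two arcs is transformed into $C^{1,\alpha'}$ regularity of the straightened boundary at $0$, with $\alpha'=\alpha\theta_k/\pi>0$ (the change of variable $r\mapsto r^{\pi/\theta_k}$ merely rescales the Hölder exponent of the tangent angle). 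Applying Kellogg--Warschawski to the straightened domain produces a conformal map $G=P\circ\Phi$ that is $C^{1,\alpha'}$ up to the boundary with $G(\zeta_k)=0$ and $G'(\zeta_k)\neq0$. Unwinding $\Phi=P^{-1}\circ G$ with $P^{-1}(w)=e^{i\beta_k}w^{\theta_k/\pi}$ then gives
\[
\Phi(\zeta)-x_k = c\,(\zeta-\zeta_k)^{\theta_k/\pi}\bigl[1+O(|\zeta-\zeta_k|^{\alpha'})\bigr], \qquad \Phi'(\zeta)=\tfrac{\theta_k}{\pi}\,c\,(\zeta-\zeta_k)^{\theta_k/\pi-1}\bigl[1+O(|\zeta-\zeta_k|^{\alpha'})\bigr],
\]
with $c\neq0$. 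The first relation yields $|\mathcal{T}(x)-\mathcal{T}(x_k)|=|\zeta-\zeta_k|\approx|x-x_k|^{\pi/\theta_k}$ upon inversion (the third estimate), the bound for $|\Phi'|=|D\mathcal{T}^{-1}|$ is read off directly, and the bound for $|D\mathcal{T}|=|\Phi'\circ\mathcal{T}|^{-1}$ follows by substituting $|\zeta-\zeta_k|\approx|x-x_k|^{\pi/\theta_k}$.

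Finally I would upgrade these pointwise asymptotics to the uniform two-sided bounds over the whole ball $B(x_k,\delta_0)$. On a small ball $B(x_k,\delta_1)$ the error terms above have modulus at most $\tfrac12$, so the displayed relations immediately give the claimed $M^{\pm1}$ bounds after absorbing $|c|$ and $\theta_k/\pi$ into $M$; on the remaining annulus $\delta_1\le|x-x_k|\le\delta_0$ the map is $C^{1,\alpha}$ up to the boundary with nonvanishing derivative (only the corner point itself is singular, by the choice of $\delta_0$), while $|x-x_k|^{\pi/\theta_k-1}$ is bounded above and below by positive constants, so compactness again gives the bounds after enlarging $M$. The main obstacle is the straightening lemma: one must verify carefully that raising to the power $\pi/\theta_k$ sends two $C^{1,\alpha}$ arcs to a single $C^{1,\alpha'}$ curve and that the resulting error terms can be differentiated, so that the expansion for $\Phi'$ is legitimate and, crucially, \emph{uniform} on a fixed punctured neighborhood. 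It is this uniformity, rather than the mere pointwise limit supplied by the classical corner theorems, that converts the asymptotics into the stated two-sided constants.
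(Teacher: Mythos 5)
Your core strategy is the same as the paper's: straighten each corner with the power map $z\mapsto (z-x_k)^{\pi/\theta_k}$, invoke the Kellogg--Warschawski theorem on the straightened ($C^{1,\alpha'}$) domain, and unwind to read off the behaviour of $\mathcal{T}'$ and $(\mathcal{T}^{-1})'$; the treatment away from the corners is also identical. Two remarks. First, the one step you gloss over is the localization: $G=P\circ\Phi$ is \emph{not} a Riemann map of the unit disk, since $P$ is only injective (and only straightens the boundary) on a small neighborhood of the corner, so Kellogg--Warschawski does not apply to it directly. The paper repairs this by choosing a smooth Jordan subdomain $D_1\subseteq D$ with $\Omega\cap B(x_1,\delta_1)\subset\mathcal{T}^{-1}(D_1)\subset\Omega\cap B(x_1,2\delta_1)$, an auxiliary Riemann map $g_1:D_1\to D$, and applying Kellogg--Warschawski to $f_1=\varphi_1\circ\mathcal{T}^{-1}\circ g_1^{-1}:D\to\varphi_1(\mathcal{T}^{-1}(D_1))$, after checking via interior elliptic estimates that $\partial\mathcal{T}^{-1}(D_1)$ is $C^{1,\alpha}$ away from $x_1$; you need some version of this to make your expansion of $G$ legitimate, and it is the only genuinely missing ingredient in your writeup. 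Second, for the estimate $|\mathcal{T}(x)-\mathcal{T}(x_k)|\approx|x-x_k|^{\pi/\theta_k}$ you take a mildly different and arguably cleaner route: you extract a full multiplicative expansion $\Phi(\zeta)-x_k=c(\zeta-\zeta_k)^{\theta_k/\pi}(1+O(|\zeta-\zeta_k|^{\alpha'}))$ by integrating $G'$ along segments in the convex disk and then invert it, whereas the paper only integrates $\mathcal{T}'$ along paths inside the localized domain $\Omega_1$ and therefore needs the additional input that $\Omega_1$ is quasiconvex (via Ahlfors' quasidisk characterization), plus convexity of $\overline{D_1}$ for the reverse inequality. Your route buys you sharper asymptotics and avoids the quasiconvexity lemma, at the cost of having to justify that the error terms in the expansion of $G'$ are uniform --- which does follow from the $C^{1,\alpha'}(\overline D)$ regularity and the nonvanishing of $G'$ on $\overline D$ furnished by Kellogg--Warschawski, exactly as you indicate.
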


This proposition is very similar to \cite[Prop. 2.1]{LMW}, except that there $\partial\Omega$ needed to be piecewise $C^{2,\alpha}$ to guarantee certain properties of $D^2\mathcal{T}$. We provide the proof of our version for the convenience of the reader.

\begin{proof}
We identify $\R^2$ and $\C$ and we write here $\mathcal{T}'$ (i.e., derivative of $\calT:\C\to\C$) instead of $D\mathcal{T}$. Consider now the corner at $x_1$. 
The idea is to straighten it via the map $\varphi_1(z) := (z-x_{1})^{\pi/\theta_1}$. Since $\varphi_1$ need not be injective on $\Omega$, let $\delta_{1}\in (0,\frac 12\delta_{0}]$ be such that $\varphi_1$ is injective on $\Omega\cap B(x_{1},2\delta_{1})$. We next let $D_{1}\subseteq D$ be a $C^\infty$ Jordan domain such that 
\[
\Omega\cap B(x_{1}, \delta_{1}) \subset \mathcal{T}^{-1}(D_{1}) \subset \Omega \cap B(x_{1},2\delta_{1})
\]
and $g_{1}:D_1\to D$ be a Riemann mapping. From elliptic estimates on $\mathcal{T}$ in $B(x_{1},2\delta_{1})\setminus B(x_{1},\delta_{1})$, we conclude that the boundary of $\Omega_{1}:=\mathcal{T}^{-1}(D_{1})$ is $C^{1,\alpha}$ except at $x_1$, whereas $\widetilde \Omega_{1} :=\varphi_1(\Omega_{1})$ is $C^{1,\alpha}$ (for more details about localization and straightening, we refer to the proof of \cite[Theorem 3.9]{Pomm2}). Then, $f_{1}:= \varphi_1\circ \mathcal{T}^{-1}\circ g_{1}^{-1}$ is a Riemann mapping from $D$ to $\widetilde \Omega_{1}$. The Kellogg-Warschawski Theorem (see \cite[Theorem 3.6]{Pomm2}) now shows that $f_{1}\in C^1(\overline{D})$. Moreover, there exists $C_1>0$ such that
$$C_1^{-1}\leq |f_{1}'(\zeta)|\leq C_1\qquad \forall \zeta\in \overline{D}$$
(see \cite[Theorem 3.5]{Pomm2}). Similarly, we get the same properties for $g_{1}^{-1}$, so $\tilde f_{1}:= f_{1}\circ g_{1}\in C^1(D_{1})$ and 
$$\tilde C_1^{-1}\leq |\tilde f_{1}'(\zeta)|\leq \tilde C_1\qquad \forall \zeta\in \overline{D_{1}}$$
for some $\tilde C_1>0$.
The definition of $\tilde f_{1}$ immediately gives
\begin{equation}\label{holo}
 \frac{\theta_{1}}{\pi \tilde C_{1}} |\mathcal{T}^{-1}(y)-x_{1}|^{-\pi/\theta_{1} +1} \leq |(\mathcal{T}^{-1})'(y)|\leq \frac{\theta_{1}\tilde C_{1}}{\pi } |\mathcal{T}^{-1}(y)-x_{1}|^{-\pi/\theta_{1} +1} \qquad \forall y \in D_{1}
\end{equation}
and 
\begin{equation} \label{holo2}
 \frac{\pi }{\theta_{1}\tilde C_{1}} |x-x_{1}|^{\pi/\theta_{1} -1}\leq |\mathcal{T}'(x)|\leq \frac{\pi \tilde C_{1}}{\theta_{1}} |x-x_{1}|^{\pi/\theta_{1} -1} \qquad \forall x \in \Omega_{1}.
\end{equation}

By connectedness of $\Omega_{1}$, we know that for any $x \in \Omega_{1}$, there exists a smooth path $\gamma$ in $\Omega_{1}$ joining $x_{1}$ and $x$, and we have
\begin{equation*}
|\mathcal{T}(x)-\mathcal{T}(x_{1}) |
=\Big|\int_{0}^1 \mathcal{T}'(\gamma(t))\gamma'(t)d t \Big|
\leq \sup_{t\in (0,1)} |\mathcal{T}'(\gamma(t))| \ell(\gamma)\leq \frac{\pi \tilde C_{1}}{\theta_{1}} \ell(\gamma)^{\pi/{\theta_{1}}},
\end{equation*}
where we have used that $|\gamma(t)-x_{1}|\leq \ell(\gamma)$ and that $\frac\pi{\theta_{1}}-1>0$. We now claim that $\Omega_{1}$ is {\it a-quasiconvex} for some $a\geq 1$, that is, for any $x,y\in\Omega_1$ there exists a rectifiable path $\gamma$ joining $x,y$ and satisfying
$\ell(\gamma) \leq a |x-y|$.
This follows from \ref{H} because $\partial \Omega_{1}$ is a piecewise $C^1$ Jordan curve with no interior cusp and hence a quasidisc (see, e.g., \cite{Gustafsson}), and Ahlfors shows in \cite{Ahlfors} that in two dimensions we have
\[
\partial \Omega_{1} \text{ is a quasidisk} \Longleftrightarrow \Omega_{1} \text{ is quasiconvex}.
\]
It follows that there is $C_2>0$ such that
$$|\mathcal{T}(x)-\mathcal{T}(x_1)| \leq C_2|x-x_1|^{\pi/\theta_1} \qquad \forall x \in \Omega_{1},$$
so 
$$ |y-\mathcal{T}(x_1)|^{\theta_1/\pi} \leq C_2^{\theta_1/\pi} | \mathcal{T}^{-1} (y) -x_1| \qquad \forall y \in D_{1}.$$
If we also choose $\overline{D_1}$ convex (which can be done if we pick $\delta_1$ small enough), we also obtain
$$
|\mathcal{T}^{-1}(y)-x_1| = |\tilde f_{1}(y) -\tilde f_{1}(\mathcal{T}(x_1)) |^{\theta_1/\pi} \leq \tilde C_{1}^{\theta_1/\pi} |y-\mathcal{T}(x_1)|^{\theta_1/\pi}\qquad \forall y\in D_{1},
$$
which also implies
$$ |x-x_1|^{\pi/\theta_1} \leq \tilde C_{1} |\mathcal{T}(x) -\mathcal{T}(x_1)| \qquad \forall x\in \Omega_{1}. $$
These inequalities, together with \eqref{holo} and \eqref{holo2} yield the claims in the second bullet point for $k=1$, and similarly for any $k=2,\dots,N$. The claims in the first bullet point are obtained similarly, by considering a smooth domain $D_0\subseteq D$ such that 
\[
\Omega\setminus \bigcup_{k=1}^N B(x_{k},\delta_{k}) \subset \mathcal{T}^{-1}(D_{0}) \subset \Omega\setminus \bigcup_{k=1}^N B(x_{k},\delta_{k}/2),
\]
and using $\varphi_0(z):=z$.
\end{proof}

Putting together the above estimates on $\mathcal{T}'$ and $\mathcal{T}$, we obtain $C\ge 1$ such that for $k =1,\dots, N$ we have
\begin{equation}\label{est:DT1}
\begin{split}
&|D\mathcal{T}(\mathcal{T}^{-1}(y))|\leq C \qquad \forall y\in D, \\
&|D\mathcal{T}(\mathcal{T}^{-1}(y))|\leq C |y-\mathcal{T}(x_{k})|^{1-\frac{\theta_{k}}{\pi}} \qquad \forall y\in D\cap B(\mathcal{T}(x_{k}),\delta_0)
\end{split}
\end{equation}
when $\Omega$ satisfies \ref{H} and $\max_{k=1,\dots,N}\theta_{k} < \pi$.

\subsection{Particle trajectories and the approach of \cite{Lacave-SIAM}}\label{Sect.2.2}

Existence of global weak solutions to the 2D Euler equations in very general bounded domains (see Section~\ref{Sect.3.2} for the precise definition) was established in \cite{GV-Lac} in the Yudovich class \eqref{Yudo-class}.
With this level of regularity, local elliptic estimates allow us to define the Lagrangian flow up to the time of collision with the boundary. Indeed, following \cite[Chap. 2]{MarPul} we infer from $\omega\in L^\infty([0,\infty)\times \Omega)$ that $u$ is locally log-Lipschitz, and a classical extension of the Cauchy-Lipshitz theorem shows that for any $x\in \Omega$, there exists $t(x)>0$ and a unique curve $ X(\cdot,x)\in W^{1,\infty}([0,t(x)))$ such that $X(t,x)\in \Omega$ for each $t\in[0,t(x))$,
\[
X(t,x) = x + \int_{0}^t u(s,X(s,x))\, ds \qquad \forall t\in [0,t(x)),
\]
as well as $X(t(x),x)\in \partial\Omega$ if $t(x)<\infty$. As $u$ is uniformly (in time) log-Lipshitz on any compact subset of $\Omega$, we obtain
\[
\frac{d}{dt} X(t,x) = u(t,X(t,x)) \qquad \text{for a.e. } t\in [0,t(x)).
\]

If $u$ is not globally log-Lipshitz in $\Omega$ or does not belong to $\bigcap_{p\geq 2}W^{1,p}(\Omega)$, uniqueness of solutions is not known in general. However, it should hold if the vorticity is constant in the neighborhood of the set of points where the velocity is singular, which in our case are the obtuse corners (and possibly other points on the boundary with insufficient regularity). Here we will look for assumptions guaranteeing that the trajectories $X(\cdot,x)$ transporting $\omega$ that start inside $\Omega$ never reach $\partial\Omega$, which means, in particular, that if initially the vorticity is constant in a neighborhood of $\partial\Omega$, then it will remain such for all $t> 0$.

In order to illuminate our approach, we now recall the strategy of the proof of the main result from \cite{Lacave-SIAM}. Roughly speaking, the latter was inspired by the Lyapunov method developed by Marchioro \cite{marchioro}, but the Lyapunov function
\[
L(t):= -\ln|L_{1}(t,X(t,x))| \qquad \text{with} \qquad L_{1}(t,z):= \frac1{2\pi}\int_{\Omega} \ln\frac{| \mathcal{T}(z)- \mathcal{T}(y)|}{| \mathcal{T}(z)- \mathcal{T}(y)^*| | \mathcal{T}(y)|} \omega(t,y)\, dy
\] 
 is more complicated in \cite{Lacave-SIAM} because the singularity is weaker than in the case of a point vortex.
 As $L_{1}(t,\cdot)=\Delta^{-1}_z \omega(t,\cdot)$ (with the Dirichlet Laplacian), one can majorize $L_{1}$ by the distance to $\partial\Omega$. Hence one only needs to prove that $L$ stays finite in order to conclude that $t(x)=\infty$. We have 
\[
L'(t)=- \frac{\frac{d}{dt} X(t,x) \cdot \nabla_{z} L_{1}(t,X(t,x)) +\partial_{t} L_{1}(t,X(t,x)) }{L_{1}(t,X(t,x))} = - \frac{\partial_{t} L_{1}(t,X(t,x)) }{L_{1}(t,X(t,x))},
\]
where the (most singular) first term vanishes due to $u(t,z)=\nabla^\perp_{z} L_{1}(t,z)$ (this motivated the choice of $L$ in \cite{marchioro,Lacave-SIAM}).
To conclude, one needs to estimate $\partial_{t} L_{1}(t,z)$ by $L_{1}(t,z)$, particularly where $L_{1}(t,z)=0$. When $\omega$ has a definite sign, $L_1$ only vanishes on $\partial\Omega$, and a technical lemma is used in \cite{Lacave-SIAM} to prove that $\partial_{t} L_{1}$ also vanishes on $\partial\Omega$ and to control the relevant rate by $L_{1}$. The sign condition is needed for this choice of Lyapunov function $L$, as otherwise $L_{1}$ can vanish inside $\Omega$. The main motivation for this Lyapunov function was to treat large angles like the interior cusp, and it turned out that the arguments worked as long as all $\theta_{k}\in (\frac \pi 2,2\pi]$. For a more detailed discussion of the Lyapunov method in other contexts (such as the vortex-wave system in $\R^2$ or Euler equations with fixed point vortices in $\R^2$), we refer to \cite[Sect. 7.5]{Lacave-SIAM}.

In contrast to \cite{Lacave-SIAM}, we use here a much simpler Lyapunov function to obtain Theorem~\ref{main1}, which allows us to discard the sign condition on $\omega$ when all corners of $\Omega$ have angles smaller than $\pi$. Conversely, in the proof of our Theorem~\ref{main2} we show that the sign condition is in fact necessary to prevent particle trajectories reaching $\partial\Omega$ in finite time for general domains with corners whose angles are greater than $\pi$.

\section{Control of trajectories for domains with convex corners}\label{sec.3}
\subsection{The Lyapunov function and the proof of Theorem~\ref{main1}(i)} \label{sect.Lyapunov}

Let us consider a global weak solution $(u,\omega)$ of the Euler equations, a point $x\in \Omega$ and the trajectory $X(\cdot,x)$ starting at $x$. As $\mathcal{T}$ maps $\Omega$ to $D$ and $\partial \Omega$ to $\partial D$, it is clear that for any $t<t(x)$
\[
L(t):=1 - \ln \Big(1- | \mathcal{T}(X(t,x)) | \Big) \in [1, \infty).
\]
If $t(x)<\infty$, then we must have $\lim_{t\to t(x)} L(t)=\infty$. That is, to prove that the trajectory does not reach the boundary in finite time, we need to show that $L$ stays bounded on bounded intervals. We have
\begin{equation*}
 L'(t)= \frac{ \mathcal{T}(X(t,x))\cdot \left(D\mathcal{T}(X(t,x)) \frac{d}{dt} X(t,x)\right) }{| \mathcal{T}(X(t,x))| (1- | \mathcal{T}(X(t,x)) |)}
\end{equation*}
whenever $|\mathcal{T}(X(t,x))|\in(0,1)$.
As $\mathcal{T}$ is holomorphic, $D\mathcal{T}$ is of the form $\begin{pmatrix} a & b \\ -b & a \end{pmatrix}$ and we have $D\mathcal{T}D\mathcal{T}^T=(\det D\mathcal{T}) \rm{I}_{2}$. Using the Biot-Savart formula \eqref{Biot-Savart} to evaluate $\frac{d}{dt} X(t,x)$ now yields
\begin{align*}
 L'(t)= &
 \frac{ \det D\mathcal{T}(X(t,x)) }{2\pi | \mathcal{T}(X(t,x))| (1- | \mathcal{T}(X(t,x)) |)} \int_{\Omega} \left( \frac{ - \mathcal{T}(X(t,x)) \cdot \mathcal{T}(y)^\perp}{| \mathcal{T}(X(t,x)) - \mathcal{T}(y)|^2} + \frac{ \mathcal{T}(X(t,x)) \cdot \mathcal{T}(y)^{*\perp}}{| \mathcal{T}(X(t,x))- \mathcal{T}(y)^*|^2 } \right) \omega(t,y)\, dy \\
 =& \frac{ \det D\mathcal{T}(X(t,x)) (1 + | \mathcal{T}(X(t,x)) |)}{2\pi | \mathcal{T}(X(t,x))| } \int_{\Omega} \frac{|\mathcal{T}(y)|^2 (|\mathcal{T}(y)|^2-1) \mathcal{T}(X(t,x)) \cdot \mathcal{T}(y)^\perp }{| \mathcal{T}(X(t,x)) - \mathcal{T}(y)|^2| |\mathcal{T}(y)|^2 \mathcal{T}(X(t,x))- \mathcal{T}(y)|^2 } \omega(t,y)\, dy,
\end{align*}
where we have used $z^*=z|z|^{-2}$. This implies that
\[
 L'(t) \leq \frac{ 2 \| \omega \|_{L^\infty} \det D\mathcal{T}(X(t,x)) }{\pi | \mathcal{T}(X(t,x))| } \int_{\Omega} \frac{ (1- |\mathcal{T}(y)|) |\mathcal{T}(X(t,x)) \cdot \mathcal{T}(y)^\perp | }{| \mathcal{T}(X(t,x)) - \mathcal{T}(y)|^2| |\mathcal{T}(y)|^2 \mathcal{T}(X(t,x))- \mathcal{T}(y)|^2 }\, dy.
\]
Theorem~\ref{main1}(i) now follows from the following technical lemma.

\begin{lemma}\label{lem-technic}
 Let $\Omega\subseteq\mathbb R^2$ be a bounded open domain satisfying \ref{H}, with $\max_{k} \theta_{k}<\pi$. Then there is $C_\Omega>0$ such that
 \beq \label{3.1}
 \det D\mathcal{T}(\mathcal{T}^{-1}(\xi) ) \int_{D} \frac{ (1- |z|) |\xi \cdot z^\perp | }{| \xi - z|^2| |z|^2 \xi- z|^2 }\det D \mathcal{T}^{-1} (z) \, dz \leq C_\Omega \left| \ln (1-| \xi|)\right| 
 \eeq
for all $\xi\in D\setminus B(0,\frac 12 )$.
\end{lemma}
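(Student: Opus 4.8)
The plan is to turn \eqref{3.1} into an explicit kernel estimate on the disc, and then to exploit the fact that the prefactor $\det D\mathcal{T}$ \emph{vanishes} at the corner images at exactly the rate needed to absorb the blow-up of $\det D\mathcal{T}^{-1}$ there. Since $\mathcal{T}$ is holomorphic we have $\det D\mathcal{T}=|D\mathcal{T}|^2$ and $\det D\mathcal{T}^{-1}=|D\mathcal{T}^{-1}|^2$, so, writing $p_k:=\mathcal{T}(x_k)$ and $\beta_k:=\theta_k/\pi\in(0,1)$, Proposition~\ref{prop T} and \eqref{est:DT1} give $\det D\mathcal{T}(\mathcal{T}^{-1}(\xi))\leq C$ everywhere with the improvement $\leq C|\xi-p_k|^{2(1-\beta_k)}$ on $B(p_k,\delta_0)$, and dually $\det D\mathcal{T}^{-1}(z)\leq C$ away from the $p_k$ while $\det D\mathcal{T}^{-1}(z)\leq C|z-p_k|^{-2(1-\beta_k)}$ on $B(p_k,\delta_0)$. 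For the kernel, the identity $z=|z|^2z^*$ and the standard disc relation $|z|\,|\xi-z^*|=|\xi|\,|z-\xi^*|$ (with $\xi^*:=\xi|\xi|^{-2}$) give $||z|^2\xi-z|=|z|\,|\xi|\,|z-\xi^*|$, so for $|z|,|\xi|\geq\tfrac12$ the integrand in \eqref{3.1} is comparable to $(1-|z|)\,|\xi\cdot z^\perp|\,|\xi-z|^{-2}|z-\xi^*|^{-2}\det D\mathcal{T}^{-1}(z)$. Setting $\delta:=1-|z|$, $\epsilon:=1-|\xi|$ and letting $\phi$ be the angle between $z$ and $\xi$, elementary trigonometry yields $|\xi-z|^2\gtrsim(\delta-\epsilon)^2+\phi^2$, $|z-\xi^*|^2\gtrsim(\delta+\epsilon)^2+\phi^2$ and $|\xi\cdot z^\perp|\leq|\phi|$, so near $\partial D$ the kernel is dominated by
\[
k(\delta,\epsilon,\phi):=\frac{\delta\,|\phi|}{\bigl[(\delta-\epsilon)^2+\phi^2\bigr]\bigl[(\delta+\epsilon)^2+\phi^2\bigr]}.
\]
The region $\{|z|\leq\tfrac12\}$ contributes $O(1)$ (all denominators are bounded below and $\det D\mathcal{T}^{-1}$ is bounded, leaving an integrable $|z|^{-1}$ singularity), so it remains to integrate $k\cdot\det D\mathcal{T}^{-1}$ over the boundary layer, where $dz\asymp d\delta\,d\phi$.

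I would then split $\partial D$ into the corner neighbourhoods $B(p_k,\delta_0)$ and the complementary bulk arcs, and argue according to the position of $\xi$. If $\xi$ lies in the bulk, the prefactor is $O(1)$. Contributions from $z$ near a corner $p_k$ are harmless: there the angular separation $|\phi|$ from $\xi$ is bounded below while $\delta\leq|z-p_k|$, whence $k\cdot\det D\mathcal{T}^{-1}\lesssim|z-p_k|^{2\beta_k-1}$, which is integrable. The remaining bulk-bulk part is exactly the pure-disc estimate: integrating $k$ in $\phi$ and then in $\delta$ over $[\epsilon,\delta_0]$ produces the logarithm $\int_\epsilon^{\delta_0}\tfrac{d\delta}{\delta}\asymp|\ln\epsilon|$.

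The essential case is $\xi\in B(p_k,\delta_0)$. Put $R:=|\xi-p_k|$ and $r:=|z-p_k|$ and split the $z$-integral into $\{r\leq R/2\}$, $\{r\geq2R\}$ and $\{R/2<r<2R\}$. On the first two regions the prefactor–weight product is $\leq C(R/r)^{2(1-\beta_k)}$, and a scaling by $R$ reduces each to an integral whose radial density (after including the $r\,dr$ from $dz$) is $\asymp(r/R)^{2\beta_k}$ near $r\ll R$ and $\asymp(r/R)^{2\beta_k-3}$ near $r\gg R$; both integrals are uniformly bounded in $R$, the large-scale convergence holding precisely because $2\beta_k-3<-1$, that is $\beta_k<1$ (the small-scale convergence, $2\beta_k>-1$, being automatic). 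On the middle annulus $r\asymp R$ one has $\det D\mathcal{T}(\mathcal{T}^{-1}(\xi))\,\det D\mathcal{T}^{-1}(z)\asymp1$, so the bound reduces once more to $k$, now integrated over $r\asymp R$, and the diagonal column below $\xi$ contributes $\int_\epsilon^{R}\tfrac{d\delta}{\delta}\asymp\ln(R/\epsilon)\leq|\ln\epsilon|+C$ since $R\leq\sqrt2\,\delta_0$. Summing the finitely many pieces (including the harmless cross terms where $\xi$ is near one corner and $z$ near another) yields \eqref{3.1}.

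The main obstacle is this last case, in which the blow-up of $\det D\mathcal{T}^{-1}$, the degeneracy of $\det D\mathcal{T}$ and the singularity of the Biot–Savart kernel all compound. The two structural features that make it work are that on the diagonal $r\asymp R$ the prefactor and the weight cancel, reducing the problem to the disc, while off the diagonal the factor $(R/r)^{2(1-\beta_k)}$ supplies decay at both ends; the decay at large scales holds exactly under $\beta_k<1$, i.e. $\theta_k<\pi$, in agreement with the remark after Theorem~\ref{main1} that this angle restriction is indispensable.
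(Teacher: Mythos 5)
Your proposal is correct and follows essentially the same strategy as the paper's Appendix~\ref{app-lemma}: the same case decomposition according to the positions of $\xi$ and $z$ relative to the corner images and to the diagonal, the same cancellation of $\det D\mathcal{T}(\mathcal{T}^{-1}(\xi))$ against $\det D\mathcal{T}^{-1}(z)$ with $\theta_k<\pi$ entering exactly through convergence of the off-diagonal tail (your $2\beta_k-3<-1$ matches the paper's bound $C(\pi-\theta)^{-1}$ for the region $z\notin B(\xi,\tfrac 12|\xi|)\cup B(0,\tfrac 12|\xi|)$ in \eqref{A.3}), and the same logarithm extracted from the near-diagonal annulus. The only substantive difference is presentational: you keep the exact identity $||z|^2\xi-z|=|z|\,|\xi|\,|z-\xi^*|$ and pass to boundary-layer coordinates $(\delta,\phi)$, whereas the paper uses the one-sided inequality $||z|^2\xi-z|\ge|z|\,|\xi-z|$ off a small ball $B(\xi,R_\xi)$ (handled by a separate substitution) and reduces everything to the explicit weighted integral \eqref{A.3}.
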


The proof is postponed to Appendix~\ref{app-lemma}. The lemma and the change of variables $z=\mathcal{T}(y)$ show that
\[
L'(t)\leq C \| \omega \|_{L^\infty}\left| \ln (1-| \mathcal{T}(X(t,x)) |)\right| \le C\| \omega \|_{L^\infty} L(t)
\] 
when $L(t)\ge 2$ (because then $|\mathcal{T}(X(t,x))|>\frac 12$), with $C$ depending on $D$. This and $L(t)\ge 1$ now imply $L(t)\leq (1+L(0)) e^{C\| \omega \|_{L^\infty}t}\le 2L(0) e^{C\| \omega \|_{L^\infty}t}$, so
\[
 | \mathcal{T}(X(t,x)) | \leq 1- \exp(-2L(0)e^{C\| \omega \|_{L^\infty}t}).
\]
Hence the trajectory $X(\cdot,x)$ cannot reach $\partial\Omega$ in finite time, and Theorem~\ref{main1}(i) is proved.

\subsection{Renormalized solutions and the proofs of Theorem~\ref{main1}(ii,iii)}\label{Sect.3.2}

The goal of this section is to show how Theorem~\ref{main1}(i) implies Theorem~\ref{main1}(ii,iii).

Consider an initial vector field $u_{0}\in L^2(\Omega)$ such that $\curl u_{0}\in L^\infty(\Omega)$ and $u_0$ verifies the divergence free and impermeability conditions in the weak sense, which in bounded domains means 
\begin{equation}\label{imperm}
 \int_\Omega u_0 \cdot h \,dx = 0 \qquad \forall h \in 
 G(\Omega):=\{\nabla p   \ : \  p\in H^1(\Omega)\}.
 \end{equation}
 Similarly, we can instead consider an initial function $\omega_{0}\in L^\infty(\Omega)$, and then there exists a unique $u_{0}\in L^2(\Omega)$ with $\curl u_0=\omega_0$ that verifies \eqref{imperm}.
 
We say that $u$ belonging to the Yudovich class \eqref{Yudo-class} is a weak solution of the velocity formulation \eqref{eq.Euler1}-\eqref{eq.Euler3} with initial data $u_{0}$ whenever
\begin{equation} \label{Eulerweak}
 \int_0^{\infty} \int_\Omega \left( u \cdot \partial_t \varphi + (u \otimes u) : \nabla \varphi \right) dxdt = -\int_\Omega u_0 \cdot \varphi(0, \cdot) dx \qquad \forall \, \varphi \in \mathcal{D}\left([0, +\infty) \times \Omega\right) \text{ with } \div \varphi = 0
\end{equation}
and
\begin{equation} \label{imperm2}
\int_{\R^+} \int_\Omega u \cdot h \,dx dt= 0 \qquad \forall h \in \mathcal{D}\left([0,+\infty); G(\Omega)\right).
\end{equation}
This is equivalent to having a pair $(u,\omega)$ with $u$ in the Yudovich class \eqref{Yudo-class} and $\omega=\curl u$ in $\mathcal{D}'((0,+\infty)\times \Omega)$ that is a weak solution of the vorticity formulation \eqref{eq.Euler2}-\eqref{eq.Euler5} for initial data $\omega_{0}$ in the sense of \eqref{imperm2} and
\begin{equation} \label{Vortweak}
\int_0^{\infty} \int_\Omega \left( \omega \partial_t \varphi + \omega u \cdot \nabla \varphi \right) \,dx dt= -\int_\Omega \omega_0 \varphi(0, \cdot) dx \qquad \forall \, \varphi \in \mathcal{D}\left([0, +\infty) \times \Omega\right).
\end{equation}

Without any assumption on the regularity of $\partial\Omega$, existence of a global weak solution was established in \cite{GV-Lac} (see \cite[Remark 1.2]{GV-Lac2} for the vorticity formulation).
Let us consider such a solution when $\Omega$ is a domain verifying \ref{H}. As explained in Section~\ref{sec.2}, we can write $u$ in terms of $\omega:=\curl u$ through the Biot-Savart law \eqref{Biot-Savart} and construct for every $x\in \Omega$ a $W^{1,\infty}$-in-time trajectory $X(\cdot,x)$ starting at $x$. The main result of the previous section is that these trajectories never reach $\partial\Omega$ in finite time and they are then defined for every $t\in [0,\infty)$.

Nevertheless, it is not obvious for weak solutions that the vorticity is transported by the flow, namely that $\omega(t,X(t,x))=\omega_{0}(x)$. To get this property, we recall in Step 1 below that the solution is more regular than \eqref{Yudo-class} when $\Omega$ satisfies \ref{H} with $\alpha=1$, and that it is actually a renormalized solution in the sense of DiPerna-Lions. This will imply that $\omega(t,X(t,x))=\omega_{0}(x)$. We can then also conclude from the previous section that the vorticity stays constant in the neighborhood of the boundary if it verifies this property initially, which then implies uniqueness of global weak solutions as we show in Step 2. 

\bigskip

\noindent {\bf Step 1: Renormalized solutions}

When the domain is piecewise $C^{1,1}$ with the corners having arbitrary angles, and the vorticity is bounded, it is known from elliptic theory in domains with corners (see, e.g., \cite{Kondra,Grisvard, Mazya}) that the stream function $\psi:= \Delta^{-1}\omega$ (with $\Delta$ the Dirichlet Laplacian) belongs to $W^{2,p}(\Omega)$ for any $p\in [1,\frac 43)$. (We recall that this is false in general for $C^1$ domains, see \cite{Kenig}.) Then $\omega$ solves (in the weak sense of \eqref{Vortweak}) the transport equation with the advecting vector field $u=\nabla^\perp \psi$ and
\begin{equation*}
\psi \in L^\infty([0,\infty);W^{2,5/4}(\Omega)). 
\end{equation*}
This regularity would allow us to apply DiPerna-Lions theory for linear transport equations \cite{DiPernaLions}, but the latter was developed only for smooth domains. To bypass this restriction, we extend $(u,\omega)$ on $\R^2$ as follows. First, we note that $\Omega$ verifies the {\it Uniform Cone Condition} (see \cite[Par. 4.8]{Adams} for the precise definition) because $\theta_{k}>0$ for all $k=1,\dots,N$. Therefore \cite[Theorem 5.28]{Adams} states that there exists a {simple} $(2,\frac 54)$-extension operator $E:W^{2,5/4}(\Omega)\to W^{2,5/4}(\R^2)$, that is, there exists $K>0$ such that for any $v \in W^{2,5/4}(\Omega)$
\[
Ev=v \text{ a.e. in } \Omega \qquad\text{and} \qquad \| Ev \|_{W^{2,5/4}(\R^2)} \leq K \| v \|_{W^{2,5/4}(\Omega)}.
\]
Introducing a smooth cutoff function $\chi$ such that $\chi \equiv 1$ on $B(0,R)$ and $\chi \equiv 0$ on $B(0,R+1)$, with $R$ large enough so that $\Omega\subset B(0,R)$, we let for a.e. $t\ge 0$
\[
\bar \psi(t,\cdot) = \chi E\psi(t,\cdot) \qquad \text{and} \qquad \bar u(t,\cdot) = \nabla^{\perp} \bar \psi(t,\cdot).
\]
Hence we have for a.e. $t\ge 0$,
\begin{equation}\label{3.11}
\bar u(t,\cdot) =u(t,\cdot) \text{ a.e. on }\Omega,
\end{equation}
and
\begin{equation*}
\div \bar u(t,\cdot)=0 \text{ a.e. on }\R^2 \qquad \text{and} \qquad
 \bar u \in L^\infty(\R_+;W^{1,5/4}(\R^2)).
\end{equation*}

We next let $\bar\omega$ be the extension of $\omega$ by zero outside $\Omega$ and we note that $\bar \omega$ is a weak solution (see \eqref{Vortweak}, with $\mathbb R^2$ in place of $\Omega$) of the transport equation
\[
 \partial_{t} \bar \omega + \bar u \cdot \nabla \bar \omega=0 \qquad \text{and} \qquad \bar \omega(0,\cdot)=\overline{\omega_0}.
\]
To prove this, one only needs to consider test functions in \eqref{Vortweak} whose the support intersects $[0,\infty)\times \partial\Omega$. This was done in \cite[Lemma 4.3]{LMW} for angles less than or equal to $\frac \pi 2$ using log-Lipschitz regularity of $u$ close to acute corners, and in \cite[Proposition 2.5]{Lacave-SIAM} for angles in $(\frac\pi 2,2\pi]$ using tangency properties hidden in the explicit form of the Biot-Savart law \eqref{Biot-Savart} (see \cite[Lemma 2.6]{Lacave-SIAM}). By using appropriate cutoff functions supported near the corners, one can use these two results to obtain the desired claim about $\bar \omega$.

Therefore, the results of DiPerna and Lions \cite{DiPernaLions} on linear transport equations ensure that $\bar \omega$ is the unique weak solution in $L^\infty([0,\infty), L^{5}(\R^2))$ to the linear transport equation with velocity field $\bar u$. For a precise statement, we refer to \cite[Theorem II.2]{DiPernaLions}; we also refer to, e.g., \cite[Section 4]{Ambrosio} for more recent developments in the theory. 

Next, Theorem~\ref{main1}(i) shows that $X(t,\Omega)\subseteq \Omega$ for all $t\ge 0$. Using this and \eqref{3.11}, one can readily prove that $\tilde{\omega}(t):=X(t,\cdot)_\#\bar \omega_0$ is an $L^\infty([0,\infty),L^{5}(\R^2))$ (recall that $\omega_0$ is bounded) solution to the same transport equation but with velocity field $\bar u$ (see e.g. the proof of Proposition 2.1 in \cite{Ambrosio}). As $\bar \omega_{0}\equiv 0$ in $\Omega^c$, we can consider any extension of $X$ on $\R^2$, for instance the flow map associated to $\bar u$. Due to uniqueness, we can now conclude that $\bar \omega(t)=\tilde{\omega}(t)$ for a.e. $t\ge 0$, which in particular yields
 $$\omega(t)=X(t,\cdot)_\#\omega_0, \quad \text{for a.e. }t\ge 0 $$
in the sense that for a.e. $t\ge 0$ we have $\int_\Omega \omega(t,x)\varphi(x)\,dx=\int_\Omega \omega_0(x)\varphi(X(t,x))\,dx$ for all $\varphi\in C_c(\Omega)$.

After redefining $\omega$ on a set of measure zero, this becomes $\omega(t,x)=\omega_{0}(X^{-1}(t,x))$. Uniform boundedness of $u$ on any compact subset of $\Omega$, which follows from boundedness of $\omega$, now yields $\omega\in C([0,\infty);L^1(\Omega))$. It is then not hard to show, using the Biot-Savart law, that $u$ is continuous on $[0,\infty)\times \Omega$, which also means that \eqref{1.11} holds for all $(t,x)\in[0,\infty)\times\Omega$. We therefore proved Theorem~\ref{main1}(ii).

\bigskip

\noindent {\bf Step 2: Uniqueness if the vorticity is constant in the neighborhood of the singular part of $\partial\Omega$.}

We prove here a stronger result than Theorem~\ref{main1}(iii). Namely, that appropriate solutions to the 2D Euler equations in the sense of Theorem~\ref{main1}(ii) are unique on {\it more general domains} $\Omega$, as long as the vorticity is constant in a neighborhood of the part of $\partial\Omega$ where $\partial\Omega\notin C^{2,\tilde\alpha}$. Let $\tilde\alpha>0$ be arbitrary and denote by 
\[
\Gamma_{\tilde\alpha}:= \{ x \in \partial \Omega\,:\, \partial \Omega \cap B(x,\varepsilon) \notin C^{2,\tilde \alpha} \text{ for all }\varepsilon>0\}
\]
the singular part of $\partial \Omega$. In particular, all corners of $\Omega$ belong to this set.

\begin{proposition}\label{constant_vorticity} 
Let $\Omega\subseteq\bbR^2$ be an open bounded simply connected domain and let $u$ be a global weak solution to the Euler equations on $\Omega$ from the Yudovich class \eqref{Yudo-class} such that $\omega(t,\cdot)=\omega(0,X^{-1}(t,\cdot))$ for all $t>0$.
If there is $a\in\R$ such that $\supp(\omega(0,\cdot)-a)\cap\Gamma_{\tilde\alpha}=\emptyset$ (for some $\tilde\alpha>0$), then $u$ is the unique such solution with initial value $\omega(0,\cdot)$ until the first time $t$ such that $\supp (\omega(t,\cdot)-a)\cap \Gamma_{\tilde\alpha}\neq \emptyset$. 
\end{proposition}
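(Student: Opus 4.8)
The plan is to prove uniqueness by a standard-type argument adapted to the present low-regularity setting: suppose $u^{(1)}$ and $u^{(2)}$ are two global weak solutions in the Yudovich class with the same initial vorticity $\omega(0,\cdot)$, both satisfying the transport property $\omega^{(i)}(t,\cdot)=\omega(0,(X^{(i)})^{-1}(t,\cdot))$. The key observation is that under the hypothesis $\supp(\omega(0,\cdot)-a)\cap\Gamma_{\tilde\alpha}=\emptyset$, the diffuse part $\omega(0,\cdot)-a$ is initially supported in a region where $\partial\Omega$ is $C^{2,\tilde\alpha}$, hence where elliptic regularity gives almost-Lipschitz velocity. First I would let $T^*$ be the first time at which $\supp(\omega^{(i)}(t,\cdot)-a)$ touches $\Gamma_{\tilde\alpha}$ (for either solution), and work on $[0,T^*)$. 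Since the constant $a$ contributes a fixed, time-independent velocity field (the Biot-Savart field of a constant vorticity depends only on $\Omega$), the two velocities differ only through the transported diffuse part $\tilde\omega^{(i)}:=\omega^{(i)}-a$, whose support stays away from $\Gamma_{\tilde\alpha}$ on $[0,T^*)$.

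The main step is then to run a Yudovich-type or Lagrangian Gronwall estimate on the difference of the two flows, using the fact that both diffuse vorticities are supported in the good region. Concretely, I would write $u^{(i)}=K_\Omega[a]+K_\Omega[\tilde\omega^{(i)}]$ via \eqref{Biot-Savart}, and control the difference $u^{(1)}-u^{(2)}=K_\Omega[\tilde\omega^{(1)}-\tilde\omega^{(2)}]$. Near the support of the diffuse vorticity the velocity is log-Lipschitz (indeed the estimates behind Theorem~\ref{main1}(ii), together with $C^{2,\tilde\alpha}$ regularity of $\partial\Omega$ there, give $u\in\bigcap_{p\ge2}W^{1,p}$ locally), so one obtains an Osgood-type modulus of continuity for the flow map restricted to the transported support. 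Tracking the quantity $Q(t):=\int_\Omega |X^{(1)}(t,x)-X^{(2)}(t,x)|\,|\omega(0,x)-a|\,dx$ (or an analogous $L^1$-in-Lagrangian-label measure of the discrepancy between the two flows), I would estimate $Q'(t)$ by the difference of the two velocity fields evaluated along the flows, split into a log-Lipschitz self-interaction term and a term measuring $\|K_\Omega[\tilde\omega^{(1)}(t)-\tilde\omega^{(2)}(t)]\|$, and close the loop via the Osgood inequality to force $Q\equiv0$, hence $X^{(1)}=X^{(2)}$ and $\omega^{(1)}=\omega^{(2)}$ on $[0,T^*)$.

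The hard part will be justifying that the Biot-Savart operator $K_\Omega$ restricted to vorticities supported in the $C^{2,\tilde\alpha}$ part of $\partial\Omega$ indeed yields velocities with the quantitative log-Lipschitz (or local almost-Lipschitz) modulus needed for Osgood, \emph{uniformly} up to the good portion of the boundary and \emph{despite} the global irregularity of $u$ coming from the corners far away. The point is that the singular contributions to $\nabla u$ are localized at $\Gamma_{\tilde\alpha}$, so for $x,y$ in a fixed neighborhood of $\supp(\tilde\omega(t))$ the kernel in \eqref{Biot-Savart} and its relevant derivatives are controlled using the regularity of $\mathcal T$ away from the corners (the first bullet of Proposition~\ref{prop T}-type estimates, here on a domain with $C^{2,\tilde\alpha}$ boundary) plus the far-field regularity of the corner contributions, which are smooth away from the corners. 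I would therefore decompose the velocity difference into a near-support piece, handled by the local log-Lipschitz bound, and a far piece (including the $K_\Omega[a]$ contribution and the influence of the corners), which is Lipschitz on the support since it is evaluated away from its own singularities. Finally I would need to confirm that the transport structure is preserved, i.e. that both solutions satisfy $\omega^{(i)}(t,\cdot)=\omega(0,(X^{(i)})^{-1}(t,\cdot))$ on $[0,T^*)$—this is exactly the hypothesis of the proposition, so it may be invoked directly rather than re-derived.
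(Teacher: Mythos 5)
Your proposal is correct and follows essentially the same route as the paper: a Marchioro--Pulvirenti-style Lagrangian $L^1$ discrepancy functional for the two flows, closed by an Osgood/Gronwall argument using the log-Lipschitz modulus $r(1-\ln r)$ of the Biot--Savart kernel on the region away from $\Gamma_{\tilde\alpha}$ (obtained from Green's function bounds via the $C^2$ regularity of $\mathcal{T}$ there), together with the cancellation that reduces $u^{(1)}-u^{(2)}$ to the contribution of the transported diffuse parts supported in that good region. The only point to make precise is the bootstrapping at the end, showing that the second flow's diffuse support also stays in the good region up to the first solution's critical time, which the paper handles by defining an a priori maximal time $T'$ and then concluding $T'=T$ from the established coincidence of the flows.
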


Theorem~\ref{main1}(iii) will then follow from this and from Theorem~\ref{main1}(i,ii), because Theorem~\ref{main1}(i) shows that the (closed) support of $\omega-a$ can never reach $\partial\Omega$. We prove Proposition~\ref{constant_vorticity} by adapting the uniqueness proof of Marchioro and Pulvirenti \cite{MarPul} to non-smooth domains.

\begin{proof}[Proof of Proposition~\ref{constant_vorticity}]
Let $\omega_0:=\omega(0,\cdot)$ and assume without loss that $\|\omega_0\|_{L^\infty}\le 1$. Let $T>0$ be any time by which the support of $\omega-a$ did not reach $\Gamma_{\tilde\alpha}$. Let $\Omega_0\subseteq \Omega$ be an open set such that $|\partial\Omega_0|=0$ and $\omega_0\equiv a$ on $\Omega\setminus\Omega_0$, as well as $\overline{\bigcup_{t\in[0,T)} X(t,\Omega_0)}$ (which is compact) contains no point from $\Gamma_{\tilde\alpha}$.
 Let $C_T>0$ and an open set $\Omega_T\subseteq\Omega$ containing $\Omega\cap \overline{\bigcup_{t\in[0,T)} X(t,\Omega_0)}$ be such that with $d_T(\cdot,\cdot)$ the distance function in $\Omega_T$ we have $d_T(x,y)\le C_T|x-y|$ whenever $x,y\in\Omega_T$ and $|x-y|\le C_T^{-1}$, as well as
\[
|D^j_{x} G_\Omega(x,y)| \le \frac {C_T}{|x-y|^j} \qquad\text{for $j=1,2$}
\]
for each $(x,y)\in\Omega_T\times\Omega$. Here $G_\Omega$ is the Dirichlet Green's function for $\Omega$ and existence of $\Omega_T$ follows from the definition of $\Omega_0$ and the relation $G_\Omega(x,y)=G_{D}( \mathcal{T}(x), \mathcal{T}(y))$. Indeed, the Kellogg-Warschawski Theorem shows that $\mathcal{T}$ is $C^2$ away from $\Gamma_{\tilde\alpha}$, so the above bounds on $G_\Omega$ for $x$ away from $\Gamma_{\tilde\alpha}$ follow from the same bounds on the explicitly given function $G_D$.

Let $Y$ be the flow map of another solution $w(t,\cdot)=\omega(0,Y^{-1}(t,\cdot))$ as above, 
and let
\[
\eta(t):=|\Omega_0|^{-1} \int_{\Omega_0} | X(t,x)- Y(t,x)| dx.
\]
Let $T'\le T$ be the latest time such that $\Omega\cap \overline{\bigcup_{t\in[0,T'')} Y(t,\Omega_0)}\subseteq \Omega_T$ for any $T''\in[0,T')$. Let $K_\Omega:=\nabla^\perp_x G_\Omega$, so that the Biot-Savart laws \eqref{Biot-Savart} for $\omega$ and $w$ read
\[
u(t,x) :=\int_\Omega K_\Omega(x,y)\omega(t,y)dy\qquad\text{and}\qquad
 v(t,x) :=\int_\Omega K_\Omega(x,y) w(t,y)dy.
\]
Let 
\[
\phi(r):=
\begin{cases}
r(1-\ln r) & r\in(0,1),
\\ 1 & r\ge 1.
\end{cases}
\]
A standard argument using the above bounds on $G_\Omega$ (see Appendix 2.3 in \cite{MarPul}) shows that
\[
\max \left\{\int_{\Omega} |K_\Omega(x,y) - K_\Omega(x',y)| dy , \int_{\Omega} |K_\Omega(y,x) - K_\Omega(y,x')| dy \right\} \le C \phi(|x-x'|)
\]
for all $x,x'\in\Omega_T$ and some $C$ depending only on $C_T$, $\Omega_{T}$, and $\Omega$. This then also implies 
\begin{equation}\label {3.20}
|u(t,x)-u(t,x')| \le C \phi(|x-x'|)
\end{equation}
 for any $t< T'$ and $x,x'\in\Omega_T$ (recall that $\|\omega\|_{L^\infty}\le 1$). 
We have $ |\Omega_0|^{-1} \int_{\Omega_0} \phi(f(x))dx\le \phi(|\Omega_0|^{-1} \int_{\Omega_0} f(x)dx)$ for any $f$, due to concavity of $\phi$ and Jensen's inequality. Hence for any $t<T'$ we have
\begin{align*}
\eta(t) &\le |\Omega_0|^{-1} \int_{\Omega_0} \int_0^t |u(s,X(s,x))-u(s,Y(s,x))| ds dx + |\Omega_0|^{-1}  \int_{\Omega_0} \int_0^t |u(s,Y(s,x))-v(s,Y(s,x))| ds dx,
\\ & \le C\int_0^t \phi(\eta(s)) ds + |\Omega_0|^{-1} \int_0^t \int_{\Omega_0} \left|\int_\Omega K_\Omega (Y(s,x),y)\omega(s,y)dy - \int_\Omega K_\Omega (Y(s,x),y)w(s,y)dy \right| dx ds
\\ & = C\int_0^t \phi(\eta(s)) ds + |\Omega_0|^{-1} \int_0^t \int_{\Omega_0} \left|\int_\Omega \big[K_\Omega (Y(s,x),X(s,y))- K_\Omega (Y(s,x),Y(s,y)) \big] \omega_0(y)dy \right| dx ds,
\end{align*}
where at the end we used the measure-preserving changes of variables $y\mapsto X(s,y)$ and $y\mapsto Y(s,y)$. 

Since $\omega_0\equiv a$ on $\Omega\setminus \Omega_0$, we get
\begin{align*}
\int_{\Omega\setminus \Omega_0} K_\Omega (z,X(s,y))\omega_0(y) dy &= a \int_\Omega K_\Omega (z,X(s,y)) dy - a \int_{\Omega_0} K_\Omega (z,X(s,y)) dy 
\\ & = a \int_\Omega K_\Omega (z,y) dy - a \int_{\Omega_0} K_\Omega (z,X(s,y)) dy 
\end{align*}
for any $z\in\Omega$. Similarly
\[
\int_{\Omega\setminus \Omega_0} K_\Omega (z,Y(s,y))\omega_0(y) dy = a \int_\Omega K_\Omega (z,y) dy - a \int_{\Omega_0} K_\Omega (z,Y(s,y)) dy, 
\]
so this, $|a|\leq \|\omega_0\|_{L^\infty}\le 1$, and the measure-preserving change of variables $Y(s,x)\mapsto x$ yield 
\begin{align*}
\eta(t) & \le C\int_0^t \phi(\eta(s)) ds + 2 |\Omega_0|^{-1} \int_0^t \int_{\Omega_0}\int_{\Omega_0} \big|K_\Omega (Y(s,x),X(s,y))- K_\Omega (Y(s,x),Y(s,y)) \big|dy dx ds
\\ & \le C\int_0^t \phi(\eta(s)) ds + 2 |\Omega_0|^{-1} \int_0^t \int_{\Omega_0}\int_{\Omega} \big|K_\Omega (x,X(s,y))- K_\Omega (x,Y(s,y)) \big|dx dy ds
\\ & \le C\int_0^t \phi(\eta(s)) ds + 2C |\Omega_0|^{-1} \int_0^t \int_{\Omega_0}\phi(|X(s,y)-Y(s,y)|) dy ds
\\ & \le 3C\int_0^t \phi(\eta(s)) ds
\end{align*}
for any $t<T'$. Since $\eta(0)=0$, it follows that $\eta\equiv 0$ on $[0,T')$. So $X(t,\cdot)|_{\Omega_0}=Y(t,\cdot)|_{\Omega_0}$ for all $t\in[0,T')$, which means that $\omega\equiv w$ on $[0,T')\times\Omega$. Therefore also $T'=T$, by the definition of $T'$, finishing the proof.
\end{proof}

\section{Reaching the boundary in finite time at concave corners}\label{sec.4}

We now prove Theorem~\ref{main2}. Our domain $\Omega$ will be any domain satisfying the hypotheses which is also symmetric across the $x_1$ axis and its intersection with $D$ is the set of all $z\in D\setminus\{0\}$ with ${\rm arg}(z)\in (-\frac\theta 2,\frac\theta 2)$ (while in the case $\theta=2\pi$ the domain has an inward cusp at the origin). We let $\omega(0,\cdot)\not\equiv 0$ be supported inside $\Omega$, odd in $x_2$ and such that $\sgn(x_2)\omega(0,x)\le 0$. Steps 1 and 2 of Section~\ref{Sect.3.2} (in particular, uniqueness) and the odd symmetry show that these properties continue to hold for $\omega(t,\cdot)$ as long as $\supp \omega(t,\cdot)=X(t,\supp \omega(0,\cdot))$ does not reach $\partial\Omega$.
We let $\Omega^+:=\Omega\cap(\bbR\times(0,\infty))$, $\Omega^-:=\Omega\cap(\bbR\times(-\infty,0))$, and $\Omega^0:=\Omega\cap(\bbR\times\{0\})$

By the Riemann mapping theorem, there is a unique biholomorphism $\mathcal{T}:\Omega\to D$ such that $\mathcal{T}((\frac12,0))=0$ and $\mathcal{T}'((\frac12,0))\in (0,\infty)$. It is easy to show that $\widetilde{\mathcal{T}}:=R\circ \mathcal{T}\circ R$, with $R(x_1,x_2):=(x_1,-x_2)$, has the same properties, hence
\[
 \mathcal{T}=R\circ \mathcal{T}\circ R.
\]
So we obtain $G_{\Omega^\pm}(x,y)=G_\Omega(x,y)-G_\Omega(x,Ry)$ for $x,y\in\Omega^\pm$, with 
\[
G_\Omega(x,y)=\frac 1{2\pi}\ln\frac{|\calT(x)-\calT(y)|}{|\calT(x)-\calT(y)^*| |\calT(y)|}
\]
the Green's function on $\Omega$, where we recall that the Green's functions satisfy
\[
G_{\Omega^\pm}(x,y)=G_{\Omega^\pm}(y,x) \ \forall (x,y)\in (\Omega^\pm)^2, \quad G_{\Omega^\pm}(x,y)=0 \ \forall (x,y)\in \Omega^\pm\times \partial\Omega^\pm, \quad \Delta_{x} G_{\Omega^\pm}(\cdot,y) = \delta(\cdot-y) \ \forall y \in \Omega^\pm.
\]
As the Biot-Savart law \eqref{Biot-Savart} can be written as
\[
u(t,x)= \int_\Omega \nabla_x^\perp G_\Omega(x,y) \omega(t,y)dy,
\]
it follows that if any solution on $\Omega^\pm$ is extended onto $\Omega$ via an odd-in-$x_2$ reflection (that is, $\omega(t,Rx)=-\omega(t,x)$ and $u(t,Rx)=Ru(t,x)$), this extension will be a solution on $\Omega$. And conversely, the restriction to $\Omega^\pm$ of any odd-in-$x_2$ solution on $\Omega$ is also a solution on $\Omega^\pm$.

If $\omega(0,\cdot)$ is as above and $\supp \omega(0,\cdot)\subseteq\Omega^+\cup\Omega^-$, then Theorem~\ref{main1} shows that we have a unique global solution on $\Omega^+$ with the initial vorticity being restricted to that set, and its trajectories starting inside $\Omega^+$ never reach $\partial\Omega^+$. Its odd reflection on $\Omega$ is then a global solution $\omega$ on $\Omega$ whose trajectories starting in $\Omega^+\cup\Omega^-$ never reach $\partial\Omega\cup\Omega^0$. Hence the solution is unique, and $\omega(t,\cdot)$ vanishes on a neighborhood of $\partial\Omega\cup\Omega^0$ for each $t\ge 0$. We will show below that, nevertheless, the trajectory $X(t,x)$ for any $x\in\Omega^0$ does reach the origin (and hence $\partial\Omega$) in finite time.

If $\Gamma:=\supp \omega(0,\cdot)\cap\Omega^0\neq \emptyset$, the solution remains unique as long as no trajectory starting at some $x\in \Gamma$ reaches the origin. This is because the restriction of the solution to $\Omega^\pm$ is a solution on that set, and hence trajectories starting in $\Omega^\pm$ cannot reach $\partial \Omega^\pm$ in finite time by Theorem~\ref{main1} (while trajectories starting in $\Omega^0\setminus \Gamma$ do not pose a problem). We will show below that $X(t,\Gamma)$ does reach the origin in some (first) time $t_0>0$, and does not reach the other end of $\Omega^0$ before time $t_0$ because our choice of $\omega(0,\cdot)$ ensures that $u_2(t,x)=0$ and $u_1(t,x)<0$ for each $(t,x)\in[0,t_0)\times\Omega^0$ (see below). Therefore the solution remains unique up to time $t_0$ and $0\in\supp \omega(t_0,\cdot)\cap\partial\Omega$.

The claims in the last two paragraphs (and hence Theorem~\ref{main2}) will be proved once we show existence of $\nu<1$ such that for each $\beta<1$ there is $C_\beta>0$ such that 
\begin{equation}\label{u1}
u_1(t,x)\le -C_\beta x_1^\nu 
\end{equation}
for all $(t,x)\in[0,t_0)\times\beta\Omega^0$, with $t_0$ the first time such that $0\in X(t_0,\Gamma)$. Because of the symmetry of $\omega$, it is sufficient to show this for the solution on $\Omega^+$, which satisfies $\omega\le 0$ and is a restriction to $\Omega^+$ of the solution from $\Omega$.

\smallskip
\noindent{\it Remark.}
 As $\theta>\pi$, the velocity $u$ is in general unbounded close to the corner. Nevertheless, for odd-in-$x_2$ vorticities, $u$ is given by the Biot-Savart law on $\Omega^{+}$, where the corner has angle $\frac \theta 2\leq \pi$. So the symmetry cancels the most singular term in the Biot-Savart law on $\Omega$, and $u$ will be bounded and continuous on $\overline\Omega$. If $\theta<2\pi$, this continuity and the tangency condition imply that $u$ vanishes at the corner, so $\theta >\pi$ (i.e., $\frac \theta 2 > \frac\pi 2$) is necessary to have \eqref{u1} with $\nu<1$.
\smallskip

Let us now show \eqref{u1}.
Let $\calT:\Omega^+\to D$ be as before (but for $\Omega^+$ and mapping $(\frac 12,\frac 12)$ to $0$). Since 
\[
\frac{|\zeta-z|^2}{|\zeta-z^*|^2|z|^2}=1-\frac{(1-|\zeta|^2)(1-|z|^2)}{|\zeta-z^*|^2|z|^2}\in [0,1] 
\]
for $z\in D$ and $\zeta\in \overline D$, we have $G_D(\zeta,z)< 0$ when $\zeta\in D$ and $G_D(\zeta,z)=0$ when $\zeta\in\partial D$ (recall that $G_D(\zeta,z)=\frac 1{2\pi}\ln\frac{|\zeta-z|}{|\zeta-z^*| |z|}$). Hence when $x\in\partial\Omega^+$ (so that $|\calT(x)|=1$), the integrand in \eqref{Biot-Savart} is 
\[
 2\pi \nabla_{\zeta}^\perp G_D(\calT(x),\calT(y)) \omega(t,y) = 
 2\pi | \nabla_{\zeta} G_D(\calT(x),\calT(y))| \calT(x)^\perp\omega(t,y).
 \] 
 Moreover, from Step 1 in Section~\ref{Sect.3.2} we know that the vorticity is transported by the flow $X$, so the $L^\infty(\Omega^+)$ and $L^1(\Omega^+)$ norms of $\omega$ are conserved. So for $\gamma\in (0,1)$ close enough to $1$ and all $t\ge 0$ we have
\[
\| \omega(t,\cdot )\|_{L^1(\mathcal{T}^{-1}(B(0,\gamma)))} \geq \frac 12 \|\omega_{0}\|_{L^1(\Omega^+)}.
\]
This, $\omega\le 0$ on $\Omega^+$, and
\[
\inf_{(\zeta,z)\in\partial D\times\gamma D} | \nabla_{\zeta} G_D(\zeta,z)|=\inf_{(\zeta,z)\in\partial D\times\gamma D} \left|\frac{\zeta-z}{|\zeta-z|^2} - \frac{\zeta-z^*}{|\zeta-z^*|^2}\right|>0
\] 
show that the integral in \eqref{Biot-Savart} is a vector $-g(t,x)\calT(x)^\perp$ with
\[
\inf_{(t,x)\in[0,t_0)\times \partial\Omega^+}g(t,x)>0.
\] 
It therefore suffices to analyze the time-independent term $-D\calT^T(x)\calT(x)^\perp$ for $x\in\Omega^0$. Since $D\mathcal{T}$ is of the form $\begin{pmatrix} a & b \\ -b & a \end{pmatrix}$, we have $D\mathcal{T}^T D\mathcal{T}=(\det D\mathcal{T}) \rm{I}_{2} = |\nabla | \mathcal{T}(x)||^2 I_2$, with the last equality due to $x\in \partial \Omega^+$. Hence
\[
-D\mathcal{T}^T(x) \calT(x)^\perp = -D\calT^T(x)|\nabla | \mathcal{T}(x)||^{-1}D\calT(x)\tau_x = - \sqrt{\det D\mathcal{T}(x)} \tau_{x} ,
\]
with $\tau_x$ the counter-clockwise unit tangent to $\partial\Omega^+$ at $x$. Proposition~\ref{prop T} shows that for each $\beta<1$ there is $C_\beta>0$ such that $\sqrt{\det D\mathcal{T}(x)}= \sqrt{a^2+b^2}\geq \frac 12 |D \calT(x)| \geq C_\beta |x|^{2\pi/\theta -1}$ for all $x\in\beta\Omega^0$, which 
proves \eqref{u1} with $\nu:=\frac{2\pi} \theta -1$. The proof is finished. 

\appendix
\section{Proof of Lemma~\ref{lem-technic}} \label{app-lemma}

Let $\delta \in (0,\delta_{0}]$ be such that $\calT(B(x_{k},\delta))\subseteq B(\mathcal{T}(x_{k}),\delta)$ for all $k=1,\dots,N$, with $\delta_{0}>0$ from Proposition~\ref{prop T}. 
Such $\delta$ exists because $\max_k \theta_k<\pi$.

Let us first assume that $\xi$ is not near any $\calT(x_k)$ (with $x_k$ the corners of $\Omega$). Specifically, we assume that $\calT^{-1}(\xi)\in \Omega\setminus \bigcup_{k=1}^N B(x_{k},\delta)$. Then Proposition~\ref{prop T} provides a uniform bound on the first determinant in \eqref{3.1}, as well as on the second determinant when $z\in D\setminus \bigcup_{k=1}^N B(\mathcal{T}(x_{k}),\til\delta)$, where we pick $\til\delta=\til\delta(\Omega,\delta)\in(0,\frac 16)$ so that $\calT (\Omega\cap\bigcup_{k=1}^N B(x_{k},\delta))\supseteq D\cap \bigcup_{k=1}^N B(\mathcal{T}(x_{k}),3\til\delta)$ (the second bound will then depend on $\til\delta$). We therefore only need to estimate the integrals
 \[
 \int_{D} \frac{ (1- |z|) |\xi \cdot z^\perp | }{| \xi - z|^2 | |z|^2 \xi- z|^2 } \, dz \qquad\text{and}\qquad \int_{D\cap \bigcup_{k=1}^N B(\mathcal{T}(x_{k}),\til\delta)} \frac{ (1- |z|) |\xi \cdot z^\perp | }{| \xi - z|^2| |z|^2 \xi- z|^2 }\det D \mathcal{T}^{-1} (z) \, dz. 
 \]
If $z\in B(\mathcal{T}(x_{k}),\til\delta)$, then we have $| \xi - z|>2\til\delta$ because $\xi\in D\setminus B(\mathcal{T}(x_{k}),3\til\delta)$, and
\[
 | |z|^2 \xi- z|\geq |z|^2 \Big( |\xi -\mathcal{T}(x_{k}) | - | \mathcal{T}(x_{k}) - \tfrac{z}{|z|^2}|\Big)\geq (1-\tilde \delta)^2(3 \tilde \delta-\tfrac32 \tilde \delta) > \tilde \delta
\]
because $\tilde \delta<\frac 16$. This and the $D\mathcal{T}^{-1}$ bound from Proposition~\ref{prop T} estimate the second integral above by a constant depending only on $\Omega$ (through $\delta$ and $\til\delta$, which depend only on $\Omega$). 

If $\xi\in B(0, 1-\delta)$, then $| |z|^2 \xi- z|\ge (1-|z||\xi|)|z|\ge \delta|z|$. Since $|\xi \cdot z^\perp |\le |\xi-z| |z|$ and we only consider $\xi\in D\setminus B(0,\frac 12)$ in this lemma (so that $\max\{|\xi-z|,|z|\}\ge\frac 14$), it follows that the first integral above is also uniformly bounded in all $\xi\in B(0, 1-\delta)$. It therefore suffices to show that
\beq\lb{A.1}
 \int_{D} \frac{ (1- |z|) |\xi \cdot z^\perp | }{| \xi - z|^2 | |z|^2 \xi- z|^2 } \, dz \le C \left| \ln (1-| \xi|)\right|
\eeq
for $\xi\in D\setminus B(0, 1-\delta)$. From rotational symmetry of this integral in $\xi$ it follows that we only need to consider $\xi\in D\cap B(-e_1,\delta)$, with $e_1=(1,0)$, to finish the proof for $\calT^{-1}(\xi)\in \Omega\setminus \bigcup_{k=1}^N B(x_{k},\delta)$.

If instead $\calT^{-1}(\xi)\in B(x_{k},\delta)$ for some $k$, then our choice of $\delta$ shows that $\xi\in B(\calT(x_{k}),\delta)$, and so Proposition~\ref{prop T} and \eqref{est:DT1} show that we only need to prove
 \[
|\xi-\mathcal{T}(x_{k})|^{2\frac{\pi-\theta_{k}}{\pi}} \int_{D} \frac{ (1- |z|) |\xi \cdot z^\perp | }{| \xi - z|^2| |z|^2 \xi- z|^2 }|z-\mathcal{T}(x_{k})|^{2\frac{\theta_{k}-\pi}{\pi}} \, dz \le C \left| \ln (1-| \xi|)\right|. 
 \]
This is because the integral in \eqref{3.1} over $B(\calT(x_{j}),\delta)$ for each $j\neq k$ is obviously estimated above by a constant, using again $\min_{k} \theta_{k}>0$. We can now assume without loss that $\calT(x_k)=-e_1$. Thus we only need to show 
\beq\lb{A.2}
|\xi+e_1|^{2\frac{\pi-\theta}{\pi}} \int_{D} \frac{ (1- |z|) |\xi \cdot z^\perp | }{| \xi - z|^2| |z|^2 \xi- z|^2 }|z+e_1|^{2\frac{\theta-\pi}{\pi}} \, dz \le C \left| \ln (1-| \xi|)\right|
 \eeq
 for any $\xi\in D\cap B(-e_1,\delta)$ and any fixed $\theta\in(0,\pi]$ (so this includes also \eqref{A.1}), where $C$ may depend on $\theta$. 

Let $R_\xi:=\frac 14(1-|\xi|)$ and first consider the left-hand side of \eqref{A.2} with the integral only over $z\in B(\xi,R_\xi)$. The substitution $\eta:=z-\xi$ yields
\beq\lb{A.2a}
|\xi+e_1|^{2\frac{\pi-\theta}{\pi}} \int_{B(0,R_\xi)} \frac{ (1- |\eta+\xi|) |\xi \cdot \eta^\perp | }{| \eta|^2| (1-|\eta+\xi|^2) \xi+ \eta|^2 }|\eta+\xi+e_1|^{2\frac{\theta-\pi}{\pi}} \, d\eta.
\eeq
For $\eta \in B(0,R_{\xi})$ we have $|\eta|\leq \frac14(1-|\xi|)\leq \frac14|\xi+e_{1}|$, hence $|\xi+e_{1}|\leq |\eta+\xi+e_1|+ |\eta|$ yields $|\eta+\xi+e_1|\ge \frac 34|\xi+e_1|$. Since also $1- |\eta+\xi|\in(3R_\xi, 5R_\xi)$ and $|\xi|\ge 1-\delta\ge \frac 23$, \eqref{A.2a} is no more than 
\[
 \int_{B(0,R_\xi)} \frac{ 5R_\xi |\eta| }{| \eta|^2| 2R_\xi- |\eta||^2 } (4/3)^2 \, d\eta 
 \le 10 \int_{B(0,R_\xi)} \frac{ 1 }{ R_\xi| \eta| } \, d\eta =20\pi .
\]
Hence we only need to prove \eqref{A.2} with the integral over $z\in D\setminus B(\xi,R_\xi)$. 

For these $z$ we employ the estimate
\[
\left| |z|^2 \xi- z\right| = |z| \left| |z|\xi-\frac{z}{|z|}\right| \ge |z||\xi-z|,
\]
where in the inequality we used that the points $|z|\xi$ and $\frac{z}{|z|}$ lie on the same radii of $D$ as the points $\xi$ and $z$, respectively, but $||z|\xi|<|z|$ and $|\frac{z}{|z|}| > |\xi|$
(hence the distance of the former pair is larger). After also using $|\xi \cdot z^\perp |= |(\xi-z) \cdot z^\perp | \le|\xi-z||z|$, we are left with proving
\[
|\xi+e_1|^{2\frac{\pi-\theta}{\pi}} \int_{D\setminus B(\xi,R_\xi)} \frac{ 1- |z| }{| \xi - z|^3 |z|}|z+e_1|^{2\frac{\theta-\pi}{\pi}} \, dz \le C_\theta \left| \ln (1-| \xi|)\right|
 \]
 for all $\xi\in D\cap B(-e_1,\delta)$ and $\theta\in(0,\pi]$. This is obviously true if we restrict the integral to $z\in B(0,\frac 12)$, while on the rest of the domain the $|z|$ in the denominator can be neglected. So after we also shift both $\xi$ and $z$ to the right by 1 (and let $R_\xi':=\frac 14(1-|\xi-e_1|)$), it suffices to prove
\beq\lb{A.3}
|\xi|^{2\frac{\pi-\theta}{\pi}} \int_{B(e_1,1)\setminus B(\xi,R_\xi')} \frac{ 1- |z-e_1| }{| \xi - z|^3}|z|^{2\frac{\theta-\pi}{\pi}} \, dz \le C_\theta \left| \ln (1-| \xi-e_1|)\right|
 \eeq
for all $\xi\in B(e_1,1)\cap B(0,\delta)$ and $\theta\in(0,\pi]$. 
 
If we restrict the integral to $z\in B(0,\frac 12|\xi|)$ and use $| z| \ge 1-| z-e_1|$, the left-hand side will be no more than
\[
|\xi|^{2\frac{\pi-\theta}{\pi}} \int_{B(0,\frac 12|\xi|)} \frac{ |z| }{|\xi|^3/8}|z|^{2\frac{\theta-\pi}{\pi}} \, dz \le C. 
\]
If we restrict the integral in \eqref{A.3} to $z\notin B(\xi,\frac 12|\xi|)\cup B(0,\frac 12|\xi|)$ and use also $|\xi-z|\geq \frac13|z|$, the left-hand side will be no more than
\[
|\xi|^{2\frac{\pi-\theta}{\pi}} \int_{B(e_1,1)\setminus (B(\xi,\frac 12|\xi|)\cup B(0,\frac 12|\xi|))} \frac{ |z| }{|z|^3/27}|z|^{2\frac{\theta-\pi}{\pi}} \, dz \le \begin{cases} C(\pi-\theta)^{-1} & \theta\in(0,\pi), \\ C \left| \ln | \xi| \right| & \theta=\pi. \end{cases}
\]
If we restrict the integral in \eqref{A.3} to $z\in B(\xi,\frac 12|\xi|)$ and use that
\[
1- |z-e_1| \le |\xi-z| + (1- |\xi-e_1|) = |\xi-z|+4R_\xi' \le 5|\xi-z|
\]
for $z\in B(e_1,1)\setminus B(\xi,R_\xi')$, the left-hand side will be no more than
\[
\int_{B(\xi, \frac 12|\xi|)\setminus B(\xi,R_\xi')} \frac{ 5 }{| \xi - z|^2} 4 \, dz \le C \left| \ln R_\xi' \right|.
\]
Since $4R_\xi'=1-| \xi-e_1|\le |\xi|\le\delta\le\frac 13$, we obtain \eqref{A.3} and the proof is finished.

\def\cprime{$'$}

\adrese

\end{document}